\newtheorem{theorem}{Theorem}[section]
\newtheorem{lemma}[theorem]{Lemma}
\newtheorem{example}[theorem]{Example}
\newtheorem{remark}[theorem]{Remark}
\newtheorem{corollary}[theorem]{Corollary}
\newtheorem{proposition}[theorem]{Proposition}
\newcommand{\minusre}{\hspace{0.3em}\raisebox{0.3ex}{\sl \tiny /}\hspace{0.3em}}
\newcommand{\minusli}{\hspace{0.3em}\raisebox{0.3ex}{\sl \tiny $\setminus $}\hspace{0.3em}}
\newcommand{\dx}{\, \mbox{\rm d}}
\newcommand{\Aff}{\mbox{\rm Aff}}
\begin{document}
\title[The  Structure of States on Pseudo Effect Algebras]{The
Lattice and Simplex Structure of States on Pseudo Effect Algebras}
\author{Anatolij Dvure\v censkij}
\date{}
\maketitle

\begin{center}
\footnote{Keywords: Pseudo effect algebra; effect algebra; Riesz
Decomposition Properties; signed measure, state; Jordan signed
measure, unital po-group; simplex; Choquet simplex; Bauer simplex

AMS classification:  81P15, 03G12, 03B50

The  author thanks  for the support by Center of Excellence SAS
-~Quantum Technologies~-,  ERDF OP R\&D Projects CE QUTE ITMS
26240120009 and meta-QUTE ITMS 26240120022, the grant VEGA No.
2/0032/09 SAV. }
\small{Mathematical Institute,  Slovak Academy of Sciences\\
\v Stef\'anikova 49, SK-814 73 Bratislava, Slovakia\\
E-mail: {\tt dvurecen@mat.savba.sk}, }
\end{center}

\begin{abstract}  We study states, measures, and signed measures on
pseudo effect algebras with some kind of the Riesz Decomposition
Property, (RDP). We show that the set of all Jordan signed measures
is always an Abelian Dedekind complete $\ell$-group. Therefore, the
state space of the pseudo effect algebra with (RDP) is either empty
or a nonempty Choquet simplex or even a Bauer simplex. This will
allow represent states on pseudo effect algebras by standard
integrals.
\end{abstract}

\section{Introduction}

The seminal paper by Birkhoff and von Neumann \cite{BiNe} showed
that the events of quantum mechanical measurements do not fulfill
the axioms of Boolean algebras and therefore also do not axioms of
the classical probability theory presented by Kolmogorov \cite{Kol}.
It initiated the research of the mathematical foundations of quantum
physics. Nowadays, there appeared a whole hierarchy of so-called
quantum structures, like orthomodular lattices and posets,
orthoalgebras, etc. Since the Nineties, we are intensively studying
effect algebras that were introduced by Foulis and Bennett
\cite{FoBe}. An extensive source of information about effect
algebras can be found in \cite{DvPu}. Orthodox examples  of the
Hilbert space quantum mechanics are the system of closed subspaces,
$\mathcal L(H),$  of a Hilbert space $H$ (real, complex or
quaternionic) and the system of all Hermitian operators, $\mathcal
E(H),$ that are between the zero operator and the identity operator.
An effect algebra $E$ is a partial algebraic structure with a
partially defined binary operation, $+$, that is commutative and it
models join of ``mutually exclusive" events. In many cases, it is an
interval in a po-group (= partially ordered group), like $\mathcal
E(H)$ is an interval in the po-group $\mathcal B(H)$ of all
Hermitian operators on a Hilbert space $H.$ A sufficient condition
for an effect algebra to be an interval is e.g. the Riesz
Decomposition Property (RDP, for short); and in such a case, $E$ is
an interval in a unique unital Abelian po-group $(G,u)$ with
interpolation, or equivalently, with (RDP), see \cite{Rav} or
\cite[Thm 1.7.17]{DvPu}.

In the last decade, there appeared many structures where the basic
operation, $+,$ is not necessarily commutative. The papers
\cite{DvVe1, DvVe2} present a non-commutative generalization of
effect algebras, called {\it pseudo effect algebras}. In some
important examples, they are also an interval in a unital po-group
but not necessarily Abelian. Sufficient conditions for a pseudo
effect algebra to be an interval in a unital po-group are stronger
versions of (RDP), see \cite{DvVe2} for more details.

Any measurement is accomplished by probabilistic reasoning. The
quantum mechanical one is described by a state, an analogue of a
probability measure.  The state space of any pseudo effect algebra
is an interesting structure that can be also void, see e.g.
\cite{Dvu1}, but in general it is a convex compact Hausdorff
topological space. In very important cases, it is a simplex and this
allows then characterize states via an integral through a regular
Borel probability measure, in some cases even in a unique way, see
\cite{Dvu3}.

If an effect algebra satisfies (RDP), then it is an interval in an
Abelian unital po-group with interpolation (RIP), so that it is  a
non-void simplex, \cite[Thm 5.1]{Dvu2}. If $E$ is a pseudo effect
with (RDP) that is an interval in a unital po-group, then it can
happen that the state space is empty, \cite{Dvu1}. In \cite[Thm
4.2]{Dvu3}, we have showed that every interval pseudo effect algebra
with (RDP) or an effect algebra with (RDP)$_1$ is a simplex.

The Riesz Decomposition Property is a weaker form of
distributivity~- it allows to make a joint refinement of two
decompositions of the unit element. This is a reason why (RDP) fails
to hold for $\mathcal L(H)$ and $\mathcal E(H).$

We do not know whether every pseudo effect algebra with (RDP) is an
interval in a unital po-group, this is known only for a stronger
version (RDP)$_1$, \cite[Thm 5.7]{DvVe2}. Hence, we cannot directly
apply the result from \cite[Thm 4.2]{Dvu3}. Therefore, we prove in
the paper that the state space of a pseudo effect algebra with (RDP)
is empty or a non-void Choquet simplex, Theorem \ref{th:4.2}.  To
prove that, we are studying the set of Jordan signed measures on a
pseudo effect algebra with (RDP). We show that such a set is either
a singleton containing only the zero measure or it is a non-trivial
Abelian Dedekind complete $\ell$-group (= lattice ordered). The
simplex structure will be finally applied to represent a state as an
integral through a unique regular Borel probability measure. We note
that such a representation of states for MV-algebras (= effect
algebras with (RDP)$_2$ = Phi-symmetric effect algebras, see
\cite{BeFo}) was proved in \cite{Kro, Pan} and for effect algebras
in \cite{Dvu3}.

The paper is organized as follows.

The elements of pseudo effect algebras are presented in Section 2.
Section 3 describes the lattice structure of the group of all Jordan
signed measures on a pseudo effect algebra with (RDP). Section 4
will describe some basic properties of Jordan signed measures that
were known only for classical measures. Applications of the simplex
structures of the state space, Choquet or Bauer simplices, for
representation of states by integral are given in Section 5. The
final concluding remarks are presented in Section 6.

\section{Pseudo Effect Algebras}

Following \cite{DvVe1,Dvu2}, we say that a {\it pseudo effect
algebra} is a partial algebra  $(E; +, 0, 1)$, where $+$ is a
partial binary operation and $0$ and $1$ are constants, such that
for all $a, b, c \in E$, the following holds

\begin{enumerate}
\item[(i)] $a+b$ and $(a+b)+c$ exist if and only if $b+c$ and
$a+(b+c)$ exist, and in this case $(a+b)+c = a+(b+c)$;

\item[(ii)]
  there is exactly one $d \in E$ and
exactly one $e \in E$ such that $a+d = e+a = 1$;

\item[(iii)]
 if $a+b$ exists, there are elements $d, e
\in E$ such that $a+b = d+a = b+e$;

\item[(iv)] if $1+a$ or $a+1$ exists, then $a = 0$.
\end{enumerate}

If we define $a \le b$ if and only if there exists an element $c\in
E$ such that $a+c =b,$ then $\le$ is a partial ordering on $E$ such
that $0 \le a \le 1$ for any $a \in E.$ It is possible to show that
$a \le b$ if and only if $b = a+c = d+a$ for some $c,d \in E$. We
write $c = a \minusre b$ and $d = b \minusli a.$ Then

$$ (b \minusli a) + a = a + (a \minusre b) = b,
$$
and we write $a^- = 1 \minusli a$ and $a^\sim = a\minusre 1$ for any
$a \in E.$

For basic properties of pseudo effect algebras see \cite{DvVe1,
DvVe2}. We recall that if $+$ is commutative, $E$ is said to be an
{\it effect algebra}; for a comprehensive overview on effect
algebras see e.g. \cite{DvPu}. It is worthy to remark that effect
algebras are equivalent to D-posets, where the basic operation is a
difference of two comparable events, \cite{KoCh}.

We recall that a {\it po-group} (= partially ordered group) is a
group $G$ with a partial order, $\le,$ such that if $a\le b,$ $a,b
\in G,$ then $x+a+y \le x+b+y$ for all $x,y \in G.$  We denote by
$G^+$ the set of all positive elements of $G.$ If, in addition,
$\le$ implies that $G$ is a lattice, we call it an $\ell$-group (=
lattice ordered group). An element $u\in G^+$ is said to a {\it
strong unit} if given $g \in G,$ there is an integer $n\ge 1$ such
that $g \le nu,$ and the couple $(G,u)$ with a fixed strong unit is
said to  a {\it unital po-group.} The monographs like \cite{Fuc,
Gla} can serve as a basic source of information about partially
ordered groups.

If $(G,u)$ is a unital (not necessary Abelian) po-group with strong
unit $u$, and
$$
\Gamma(G,u) := \{g \in G: \ 0 \le g \le u\},\eqno(2.1)
$$
then $(\Gamma(G,u); +,0,u)$ is a pseudo effect algebra if we
restrict the group addition $+$ to  the set of all those $(x,y)\in
\Gamma(G,u)\times \Gamma(G,u)$ that $x\le u-y.$

Every pseudo effect algebra $E$ that is isomorphic to some
$\Gamma(G,u)$ is said to be an {\it interval pseudo effect algebra}.

According to \cite{DvVe1}, we introduce for pseudo effect algebras
the following forms of the Riesz Decomposition Properties which in
the case of commutative effect algebras can coincide:

\begin{enumerate}
 \item[(a)] For $a, b \in E$, we write $a \ \mbox{\bf com}\ b$ to mean that
for all $a_1 \leq a$ and $b_1 \leq b$, $a_1$ and $b_1$ commute.

\item[(b)] We say that $E$ fulfils the {\it Riesz
Interpolation Property}, (RIP) for short, if for any $a_1, a_2, b_1,
b_2 \in E$ such that $a_1, a_2 \,\leq\, b_1, b_2$ there is a $c \in
E$ such that $a_1, a_2 \,\leq\, c \,\leq\, b_1, b_2$.

 \item[(c)] We say that $E$ fulfils the {\it weak Riesz
Decomposition Property}, (RDP$_0$) for short, if for any $a, b_1,
b_2 \in E$ such that $a \leq b_1+b_2$ there are $d_1, d_2 \in E$
such that $d_1 \leq b_1$, $\;d_2 \leq b_2$ and $a = d_1+d_2$.

 \item[(d)] We say that $E$ fulfils the {\it Riesz
Decomposition Property}, (RDP) for short, if for any $a_1, a_2, b_1,
b_2 \in E$ such that $a_1+a_2 = b_1+b_2$ there are $d_1, d_2, d_3,
d_4 \in E$ such that $d_1+d_2 = a_1$, $\,d_3+d_4 = a_2$, $\,d_1+d_3
= b_1$, $\,d_2+d_4 = b_2$.

 \item[(e)] We say that $E$ fulfils the {\it
commutational Riesz Decomposition Property}, (RDP$_1$) for short, if
for any $a_1, a_2, b_1, b_2 \in E$ such that $a_1+a_2 = b_1+b_2$
there are $d_1, d_2, d_3, d_4 \in E$ such that (i) $d_1+d_2 = a_1$,
$\,d_3+d_4 = a_2$, $\,d_1+d_3 = b_1$, $\,d_2+d_4 = b_2$, and (ii)
$d_2 \ \mbox{\bf com}\ d_3$.

 \item[(f)] We say that $E$ fulfils the {\it strong
Riesz Decomposition Property}, (RDP$_2$) for short, if for any $a_1,
a_2, b_1, b_2 \in E$ such that $a_1+a_2 = b_1+b_2$ there are $d_1,
d_2, d_3, d_4 \in E$ such that (i) $d_1+d_2 = a_1$, $\,d_3+d_4 =
a_2$, $\,d_1+d_3 = b_1$, $\,d_2+d_4 = b_2$, and (ii) $d_2 \wedge d_3
= 0$.
\end{enumerate}

We have the implications

\centerline{ (RDP$_2$) $\Rightarrow$ (RDP$_1$) $\Rightarrow$ (RDP)
$\Rightarrow$\ (RDP$_0$) $\Rightarrow$ (RIP). }

The converse of any of these implications does not hold. For
commutative effect algebras we have

\centerline{ (RDP$_2$) $\Rightarrow$ (RDP$_1$) $\Leftrightarrow$
(RDP) $\Leftrightarrow$\ (RDP$_0$) $\Rightarrow$ (RIP). }

In addition, every pseudo effect algebra with (RDP)$_2$ is an
interval in a unital $\ell$-group, \cite[Prop 3.3]{DvVe1}.

In an analogous way we can define the same Riesz Decomposition
Properties for a po-group $G$, where instead of $E$ we deal with the
positive cone $G^+.$

We recall that an {\it MV-algebra} is an algebra $(A;\oplus,^*,0)$
of signature $\langle 2,1,0\rangle,$ where $(A;\oplus,0)$ is a
commutative monoid with neutral element $0$, and for all $x,y \in A$
\begin{enumerate}
\item[(i)]  $(x^*)^*=x,$
\item[(ii)] $x\oplus 1 = 1,$ where $1=0^*,$
\item[(iii)] $x\oplus (x\oplus y^*)^* = y\oplus (y\oplus x^*)^*.$
\end{enumerate}

Sometimes it is used also a total binary operation $\odot$ defined
by $a\odot b:= (a^*\oplus b^*)^*.$

If we define a partial addition, $+$, via $a+b$ is defined iff $a\le
b^*$, then $a+b = a\oplus b,$ then $(A;+,0,1)$ is an effect algebra
with (RDP)$_2,$ \cite{DvVe2}, or equivalently a Phi-symmetric effect
algebra, \cite{BeFo}; and it is an interval in an Abelian unital
$\ell$-group. Conversely, every lattice ordered effect algebra with
(RDP) or equivalently, every effect algebra with (RDP)$_2$ is in
fact an MV-algebra.

\section{Signed Measures and Jordan Signed Measures on Pseudo Effect Algebras} 

In the present section, we describe the lattice structures of the
set of Jordan signed measures on a pseudo effect algebra satisfying
(RDP). We show that it is either  trivial or a nontrivial Dedekind
complete Riesz space.

Let $E$ be a pseudo effect algebra. A {\it signed measure} on $E$ is
any mapping $m: E\to \mathbb R$ such that $m(a+b)=m(a)+m(b)$
whenever $a+b$ is defined in $E$.  Then $m(0)=0$ and  $m(a^-)=
m(a^\sim)$ for each $a\in E.$ A {\it measure} is a positive signed
measure $m,$ i.e. $m(a)\ge 0$ for $a \in E.$ Every measure is
monotone on $E.$  A {\it state} on $E$ is any measure $s$ such that
$s(1)=1.$ Let $\mathcal M(E),$ $\mathcal M(E)^+,$ and $\mathcal
S(E)$ be the sets of all signed measures,   measures, and states on
$E,$ respectively. It is clear that $\mathcal M(E)\ne \emptyset$
whilst $\mathcal S(E)$ can be empty. On $\mathcal M(E)$ we introduce
a {\it weak topology} of signed measures defined as follows: a net
of signed measures, $\{m_\alpha\},$ converges weakly to a signed
measure $m$ iff $\lim_\alpha m_\alpha(a)= m(a)$ for every $a \in E.$
Then $\mathcal M(E)$ is a non-void compact Hausdorff topological
space. Similarly,  $\mathcal S(E)$ is a compact Hausdorff space that
can be sometimes void. Moreover, $\mathcal S(E)$ is a convex set,
i.e. if $s_1,s_2 \in \mathcal S(E)$ and $\lambda \in [0,1],$ then $s
= \lambda s_1 +(1-\lambda) s_2 \in \mathcal S(E).$ A state $s$ is
{\it extremal} if from the property $s = \lambda s_1 +(1-\lambda)
s_2$ for some $s_1,s_2 \in \mathcal S(E)$ and $\lambda \in (0,1),$
we conclude $s= s_1 =s_2.$ Let $\partial_e \mathcal S(E)$ denote the
set of all extremal states on $E.$

By the Krein--Mil'man Theorem,  \cite[Thm 5.17]{Goo}, every state on
$E$ is a weak limit of a net of convex combinations of extremal
states. Hence, $\mathcal S(E) \ne \emptyset$ iff $\partial_e
\mathcal S(E)\ne \emptyset.$

In what follows, we are inspired by the research in \cite[pp.
37-41]{Goo}, where it was done for Abelian po-groups.

A mapping $d:E\to \mathbb R$ is said to be {\it subadditive}
provided $d(0) = 0$ and $d(x+y)\le d(x)+d(y)$ whenever  $x+y \in E.$

\begin{proposition}\label{pr:3.1}  Let $E$ be a pseudo effect algebra with {\rm
(RDP)} and let $d:E\to \mathbb R$ be a subadditive mapping. For all
$x\in E$, assume that the set
$$
D(x):=\{d(x_1)+\cdots+d(x_n): x = x_1+\cdots+x_n, \ x_1,\ldots,x_n
\in E,\ n\ge 1 \} \eqno(3.1)
$$
is bounded above in $\mathbb R.$  Then there is a signed measure
$m:E\to \mathbb R$ such that $m(x)=\bigvee D(x)$ for all $x\in E.$

\end{proposition}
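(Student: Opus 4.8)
Define $m(x):=\bigvee D(x)$ for each $x\in E$; the hypothesis guarantees this supremum is a finite real number, and since the trivial decomposition $x=x$ lies in the defining set, $m(x)\ge d(x)$, in particular $m(0)=0$. The content of the proposition is that $m$ is additive on defined sums. One inequality is easy: if $x+y$ is defined, then concatenating a decomposition of $x$ with one of $y$ yields a decomposition of $x+y$, so $d(x_1)+\cdots+d(x_k)+d(y_1)+\cdots+d(y_\ell)\le m(x+y)$; taking suprema over the two families separately gives $m(x)+m(y)\le m(x+y)$.

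The reverse inequality $m(x+y)\le m(x)+m(y)$ is the crux, and this is where (RDP) enters. Given any decomposition $x+y=z_1+\cdots+z_n$, I want to split each $z_i$ as $z_i=u_i+v_i$ so that $u_1+\cdots+u_n=x$ and $v_1+\cdots+v_n=y$; then subadditivity of $d$ gives $\sum_i d(z_i)\le \sum_i d(u_i)+\sum_i d(v_i)\le m(x)+m(y)$, and taking the supremum over all decompositions of $x+y$ finishes the proof. Producing such a refinement is exactly a "joint refinement of two decompositions" statement: we have the equality $x+y=z_1+\cdots+z_n$ between a two-term and an $n$-term decomposition. The clean way is to induct on $n$, at each stage peeling off one term: from $x+y=z_1+(z_2+\cdots+z_n)$ and $(x)+(y)$, apply (RDP) to the equality $x+y=z_1+w$ with $w:=z_2+\cdots+z_n$, writing $z_1=u_1+v_1$, $w=w'+w''$ with $u_1+w'=x$, $v_1+w''=y$; then $w'+w''=z_2+\cdots+z_n$, so induction (on the equality of an $n-1$-term decomposition with the two-term decomposition $w'+w''$) splits each remaining $z_i$ compatibly. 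Here I must be a little careful, because (RDP) as stated in (d) is phrased for elements of $E$ and requires $a_1+a_2=b_1+b_2$; all sums appearing are subsums of the defined sum $x+y$, hence defined, so (RDP) applies at each step. One should also note that associativity axiom (i) is what legitimizes the regrouping $z_2+\cdots+z_n$ and the reassembly of the split pieces.

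The main obstacle, then, is bookkeeping in the inductive refinement: keeping track of which partial sums exist and verifying that the elements produced by successive applications of (RDP) really do sum back to $x$ and to $y$. No single step is deep, but the induction must be set up so that the hypothesis is precisely "an equality between an arbitrary finite decomposition and a two-term decomposition," and one must check the base case $n=1$ (trivial: $z_1=x+y$ already has the required form with $u_1=x$, $v_1=y$) and confirm the existence of all intermediate sums via axiom (i). Once the refinement lemma is in hand, the two inequalities combine to give $m(x+y)=m(x)+m(y)$, so $m$ is a signed measure, completing the proof.
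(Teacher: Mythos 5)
Your proposal is correct and follows essentially the same route as the paper: define $m(x)=\bigvee D(x)$, get $m(x)+m(y)\le m(x+y)$ by concatenating decompositions, and get the reverse inequality by refining a decomposition $x+y=z_1+\cdots+z_n$ into $z_i=x_i+y_i$ with $x=x_1+\cdots+x_n$, $y=y_1+\cdots+y_n$ via (RDP) and then using subadditivity of $d$. The only difference is that you spell out the peel-off induction for the $n$-term refinement, which the paper invokes in one line; your inductive bookkeeping is sound (the order of the pieces produced by (RDP) matches, so no conjugation is needed).
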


\begin{proof}
The map $m(x):=\bigvee D(x)$ is a well-defined mapping for all $x
\in E.$ It is clear that $m(0)=0$ and now we are  going to show that
$m$ is additive on $E.$

Let $x+y \in E$ be given. For all decompositions
$$ x = x_1 +\cdots+x_n \ \mbox{and} \ y=y_1+\cdots +y_k$$
with all $x_i,y_j \in E,$ we have $x+y = x_1+\cdots+x_n + y_1+\cdots
+ y_k,$ that yields

$$ \sum_i d(x_i)+\sum_j d(y_j) \le m(x+y).
$$
Therefore, $u+v \le m(x+y)$ for all $u\in D(x)$ and $b\in D(y).$
Since $\mathbb R$ is Dedekind complete, $\bigvee$ is distributive
with respect to $+:$
\begin{eqnarray*}
m(x)+m(y)&=& \left(\bigvee D(x)\right) +m(y) = \bigvee_{u \in D(x)} (u+m(y))\\
&=& \bigvee_{u\in D(x)} \left(u+ \left(\bigvee D(y)\right)\right) =
\bigvee_{u\in
D(x)}\bigvee_{v \in D(y)} (u+v)\\
&\le& m(x+y).
\end{eqnarray*}

Conversely, let $x+y=z_1+\cdots+z_n,$ where each $z_i \in E.$ Then
(RDP) implies that there are elements $x_1,\ldots,x_n, y_1,\ldots,
y_n \in E$ such that $x = x_1+\cdots+x_n,$ $y = y_1+\cdots+y_n$ and
$z_i = x_i+y_i$ for $i=1,\dots,n.$ This yields
$$
\sum_i d(z_i) \le \sum_i (d(x_i)+d(y_i)) = \left(\sum_i
d(x_i)\right) + \left(\sum_i d(y_i)\right) \le m(x)+m(y),
$$
and therefore, $m(x+y)\le m(x)+m(y)$ and finally, $m(x+y)=m(x)+m(y)$
for all $x,y \in E$ such that $x+y$ is defined in $E,$ so that $m$
is a signed measure on $E.$
\end{proof}

Let $X$  be a poset. A mapping $m:X \to \mathbb R$ is said to be (i)
{\it relatively bounded} provided that given any subset $W$ of $X$
which is bounded (above and below) in $X$, the set $m(W)$ is bounded
in $\mathbb R,$ (ii) {\it bounded} if $m(X)$ is bounded in $\mathbb
R.$

We recall that if $m$ is a signed measure on  $E,$ then $m$ is
relatively bounded iff $m$ is bounded.

If $G$ is a  po-group, any group homomorphism $m:G \to \mathbb R$ is
said to be a {\it signed measure} on $G.$ Of course, if $m\ne 0$ is
a  measure  that is relatively bounded on $G\ne \{0\},$ then it is
not bounded on $G.$

\begin{lemma}\label{le:4.0}
If $m$ is a signed measure on a unital po-group $(G,u),$ then $m$ is
relatively bounded iff $m$ is bounded on the interval $[0,nu]$ for
each $n\ge 1.$  If, in addition, $(G,u)$ satisfies {\rm (RDP)}, then
$m$ is relatively bounded iff $m$ is bounded on the interval
$[0,u].$
\end{lemma}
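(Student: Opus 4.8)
The statement has two parts, and I would prove them in the obvious logical order. The forward direction of the first equivalence is trivial: if $m$ is relatively bounded then, since each interval $[0,nu]$ is a bounded subset of $G$, the restriction $m|_{[0,nu]}$ is bounded in $\mathbb R$. So the real content is the converse: assuming $m$ is bounded on every $[0,nu]$, show it is bounded on an arbitrary subset $W\subseteq G$ that is bounded above and below. First I would fix $a,b\in G$ with $a\le w\le b$ for all $w\in W$, and reduce to the positive cone by translating: the map $g\mapsto g-a$ carries $W$ into the interval $[0,b-a]$, and $m(w)=m(w-a)+m(a)$, so it suffices to bound $m$ on $[0,c]$ for a single $c\in G^+$. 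Now invoke the strong unit: there is $n\ge 1$ with $c\le nu$, hence $[0,c]\subseteq[0,nu]$, and by hypothesis $m$ is bounded there. This already gives the first equivalence without any use of (RDP).

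For the second equivalence, again one direction is immediate (boundedness on $[0,nu]$ for all $n$ implies in particular boundedness on $[0,u]$), so the work is to show that, under (RDP), boundedness on $[0,u]$ propagates to boundedness on $[0,nu]$ for all $n$. The key step is a decomposition argument: given $x\in G$ with $0\le x\le nu$, I would use (RDP) for the po-group $G$ (applied to the positive cone, as the excerpt notes the Riesz properties transfer to $G^+$) to write $x=x_1+\cdots+x_n$ with each $0\le x_i\le u$. Concretely, from $x\le nu = u+(n-1)u$ one applies (RDP$_0$), which is implied by (RDP), to split off $x_1\le u$ with $x-x_1\le (n-1)u$, and iterate. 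Then $|m(x)|=|\sum_i m(x_i)|\le \sum_i |m(x_i)|\le nK$, where $K:=\sup\{|m(y)|:0\le y\le u\}<\infty$. Combining with the first equivalence, relative boundedness follows.

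The main obstacle, and the only place any real care is needed, is the inductive decomposition $0\le x\le nu\ \Rightarrow\ x=x_1+\cdots+x_n$ with $x_i\in[0,u]$. I would phrase this as a separate claim proved by induction on $n$: the base case $n=1$ is vacuous; for the step, write $nu=(n-1)u+u$, apply (RDP$_0$) to $x\le (n-1)u+u$ to get $x=x_1+x_2$ with $x_1\le(n-1)u$ and $x_2\le u$, and apply the induction hypothesis to $x_1$. One should double-check that the hypothesis $x\ge 0$ is preserved—$x_1,x_2$ lie in $G^+$ by the definition of the partial order on $E$/the decomposition in $G^+$—so each piece is genuinely in $[0,u]$. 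Everything else (translation invariance of $m$, additivity, the triangle inequality in $\mathbb R$) is routine.
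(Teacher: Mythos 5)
Your proposal is correct and follows essentially the same route as the paper: translation of $W\subseteq[a,b]$ into $[0,nu]$ via the strong unit for the first equivalence, and the decomposition $[0,nu]=[0,u]+\cdots+[0,u]$ furnished by (RDP) for the second. The only difference is that you spell out by induction (via (RDP$_0$)) the interval decomposition that the paper simply asserts, which is a harmless elaboration of the same step.
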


\begin{proof}
Indeed, one direction is clear, now suppose that $m$ is bounded on
each interval $[0,nu],$ and let $W$ be bounded in $G.$  Then $W
\subseteq [a,b]$ and for some $a,b\in G.$ There is an integer $n\ge
1$ such that $-a+b \le nu.$ Then $[a,b] = a +[0,-a+b] \subseteq a +
[0,nu]$ and  $m(W)\subseteq m(a) + m([0,nu]) \subseteq m(a) +
[\alpha,\beta] = [m(a)+\alpha, m(a)+\beta]$ for some
$\alpha,\beta\in \mathbb R.$  This gives $m(W)$ is bounded in
$\mathbb R.$

If, in addition, $(G,u)$ satisfies (RDP), then $[0,nu] =
[0,u]+\cdots+[0,u].$ If $m([0,u])$ is bounded in $\mathbb R$, then
$m([0,u]) \subseteq [\alpha,\beta]$ for some $\alpha,\beta \in
\mathbb R.$  Then $m([0,nu]) \subseteq [n\alpha,n\beta].$
\end{proof}

\begin{proposition}\label{pr:3.2}
Let $E$ be a pseudo effect algebra with {\rm (RDP)}   and let
$m:E\to \mathbb R $ be a signed measure. Then $m$ is relatively
bounded if and only if $m=m_1-m_2$ for some measures $m_1,m_2$ on
$E.$
\end{proposition}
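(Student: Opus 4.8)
The plan is to prove the nontrivial direction (if $m$ is relatively bounded then $m=m_1-m_2$ with $m_1,m_2$ measures) by applying Proposition \ref{pr:3.1} to a suitable subadditive map, and to dispatch the converse quickly. For the converse, if $m=m_1-m_2$ with $m_1,m_2\in\mathcal M(E)^+$, then on any order-bounded subset $W\subseteq[a,b]$ of $E$ each $m_i$ is monotone, hence $m_i(W)\subseteq[m_i(a),m_i(b)]$ is bounded in $\mathbb R$, so $m(W)$ is bounded; thus $m$ is relatively bounded. (In fact for a pseudo effect algebra every element lies in $[0,1]$, so relative boundedness here just says $m$ is bounded, and $0\le m_i(x)\le m_i(1)$ does the job directly.)

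For the forward direction, first I would define $d:E\to\mathbb R$ by $d(x):=m(x)\vee 0$, or more simply observe that one wants the ``positive part'' of $m$. I would set $d(x)=m(x)$ if $m(x)\ge 0$ and $d(x)=0$ otherwise; then $d(0)=0$, and subadditivity $d(x+y)\le d(x)+d(y)$ follows from $m(x+y)=m(x)+m(y)\le d(x)+d(y)$ together with $0\le d(x)+d(y)$. Next I must verify the hypothesis of Proposition \ref{pr:3.1}: that for each $x\in E$ the set $D(x)=\{\sum_i d(x_i): x=x_1+\cdots+x_n\}$ is bounded above. Since $m$ is relatively bounded, hence bounded, there is $K>0$ with $|m(y)|\le K$ for all $y\in E$. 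Given a decomposition $x=x_1+\cdots+x_n$, only finitely many terms contribute and each $d(x_i)\le m(x_i)^+ $; one needs a uniform bound independent of $n$. Here I would use that the partial sums $s_j=x_1+\cdots+x_j$ form an increasing chain in $E$, so $\sum_{i}d(x_i)=\sum_{i:\,m(x_i)\ge 0} m(x_i)$, and since consecutive differences of the $s_j$ are exactly the $x_i$, the alternating telescoping is controlled: grouping the positive-valued $x_i$ contributes at most the total variation, which is bounded by $2K$ because $m$ is bounded on $E$ (a standard chain argument: any such sum of ``up-jumps'' along a chain from $0$ to $x$ is at most $\sup m - \inf m \le 2K$). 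Thus $D(x)$ is bounded above by $2K$, so Proposition \ref{pr:3.1} applies and yields a signed measure $m_1(x)=\bigvee D(x)$.

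Then I would set $m_2:=m_1-m$. Since taking the single-term decomposition $x=x$ shows $d(x)\in D(x)$, we get $m_1(x)\ge d(x)=m(x)^+\ge m(x)$ and also $m_1(x)\ge d(x)\ge 0$; hence $m_2(x)=m_1(x)-m(x)\ge 0$ for all $x$, so $m_2$ is a measure. And $m_1$ is a measure because $m_1(x)\ge 0$. Finally $m_2$ is additive: it is the difference of the signed measure $m_1$ and the signed measure $m$, hence a signed measure, and being nonnegative it is a measure on $E$. This gives $m=m_1-m_2$ as required.

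The main obstacle I anticipate is the verification that $D(x)$ is bounded above, i.e.\ making rigorous the ``total variation along a chain is bounded'' step in the pseudo effect algebra setting, where $+$ is only partial and noncommutative. The key point is that any decomposition $x=x_1+\cdots+x_n$ induces an increasing chain $0=s_0\le s_1\le\cdots\le s_n=x$ with $s_{j}=s_{j-1}+x_j$, so $m(x_j)=m(s_j)-m(s_{j-1})$; summing $m(x_j)$ over those $j$ with $m(x_j)\ge 0$ is a sum of nonnegative increments of the bounded real sequence $(m(s_j))_j$, hence bounded by $\sup_E m-\inf_E m\le 2K$ independently of $n$ and of the decomposition. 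Once this uniform bound is in hand, the rest is a routine application of Proposition \ref{pr:3.1} and bookkeeping with $m_1,m_2$.
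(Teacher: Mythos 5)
Your converse direction and the overall strategy for the forward direction (define $d(x)=m(x)\vee 0$, check subadditivity, bound $D(x)$, invoke Proposition \ref{pr:3.1}, set $m_2=m_1-m$) coincide with the paper's proof. The gap is in the one step you yourself flag as the main obstacle: the claim that the sum of the nonnegative increments $m(x_i)=m(s_i)-m(s_{i-1})$ along the chain $0=s_0\le s_1\le\cdots\le s_n=x$ is bounded by $\sup m-\inf m\le 2K$. That is false as a statement about bounded real sequences: for a sequence such as $0,1,0,1,\dots,0,1$ (all terms in $[-K,K]$ with $K=1$) the sum of the up-jumps is about $n/2$, which is unbounded. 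Boundedness of $m$ on $E$ bounds the values $m(s_j)$, not the total variation of the sequence $(m(s_j))_j$; and appealing to ``the total variation'' of $m$ would be circular, since finiteness of the variation is essentially equivalent to what Proposition \ref{pr:3.2} asserts. So as written, your argument does not yield an upper bound for $D(x)$ that is uniform in $n$ and in the decomposition.

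The correct bound uses the algebraic structure, not just the chain of real numbers: given $x=x_1+\cdots+x_n$, let $A=\{i: m(x_i)\ge 0\}$. Using axiom (iii) one can move the terms with indices in $A$ together, replacing the skipped terms by conjugates; since $m$ is additive and $\mathbb R$ is Abelian, conjugation does not change $m$-values, so the elements indexed by $A$ sum (in increasing order of index, after these replacements) to an element $y_A\in E$ with $y_A\le x$ and $m(y_A)=\sum_{i\in A}m(x_i)$. Hence $\sum_i d(x_i)=\sum_{i\in A}m(x_i)=m(y_A)\le \sup m([0,x])\le K$, which is the uniform bound you need; this is exactly the paper's argument, packaged there via the identity $\sum_i\bigl(m(x_i)\vee 0\bigr)=\bigl(\bigvee_{A\subseteq\{1,\dots,n\}}\sum_{i\in A}m(x_i)\bigr)\vee 0$ from \cite[Lem 1.21]{Goo} together with $0\le\sum_{i\in A}x_i\le x$. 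Once this replaces your chain argument, the rest of your proof (positivity of $m_1$, additivity and positivity of $m_2=m_1-m$) is correct and agrees with the paper.
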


\begin{proof}
Assume that $m = m_1-m_2$ for some two measures $m_1, m_2 \in
\mathcal M(E)^+.$ If $W \subseteq [a,b]$ in $E,$ then
$m_1(W)\subseteq [m_1(a),m_1(b)]$ and $m_2(W)\subseteq
[m_2(a),m_2(b)].$ Then $m_1(a)-m_2(b) \le m_1(b)-m_2(a)$ and
$m(W)\subseteq [m_1(a)-m_2(b), m_1(b)-m_2(a)]$ that proves that $m$
is relatively bounded.

Conversely, let $m$ be relatively bounded. If we set $d(x):=
m(x)\vee 0$ for all $x\in E,$ then $d(0)=0.$  For all $x,y \in E$
such that $x+y$ is defined in $E,$ we have
$$
d(x+y) = (f(x)+f(y))\vee 0 \le (f(x)\vee 0)+ (f(y)\vee 0) =
d(x)+d(y),
$$
so that $d$ is subadditive.

Let us define $D(x)$ by (3.1) for each $x \in E$. We assert that
$D(x)$ is bounded above in $\mathbb R.$  By the assumption, there
are elements $a,b \in H$ such that $f([0,x])\subseteq [a,b].$  Fix a
decomposition $x = x_1+\cdots+x_n$ with $x_i\in E$ for each
$i=1,\ldots, n.$ By \cite[Lem 1.21]{Goo}, we have

$$ \sum_{i=1}^nd(x_i) =\sum_{i=1}^n (m(x_i)\vee 0) =
\left(\bigvee_{A \in 2^n}\left(\sum_{i\in A} m(x_i)\right)\right)
\vee 0.
$$
For all $A \in 2^n,$ we have
$$ 0 \le \sum_{i\in A}x_i \le x,
\mbox{and}\ \sum_{i\in A} m(x_i) = m(\sum_{i\in A}x_i) \le b.
$$
Hence, $d(x_1) + \cdots+d(x_n) \le b\vee 0,$ and consequently,
$b\vee 0$ is an upper bound for $D(x)$ that proves the assertion.

By Proposition \ref{pr:3.1}, there exists a signed measure $m_1$ on
$E$ such that $m_1(x) = \bigvee D(x)$ for all $x \in E.$ Since
$m_1(x)\ge d(x) \ge 0,$ $m_1(x)$ is a measure, and $m_1(x)\ge d(x)
\ge m(x)$ for all $x \in E.$ Hence, $ m_2 = m_1-m$ is a measure on
$E$, too.
\end{proof}

A signed measure $m$ on a pseudo effect algebra $E$ is said to be
{\it Jordan} if $m$ can be expressed as a difference of two positive
measures on $E,$ and let $\mathcal J(E)$ be the set of all Jordan
measures on $E.$  It is clear that $\mathcal J(E)$ is nonempty
because the zero mapping on $E$ belongs to $\mathcal J(E).$

For example, if $1<\dim H < \infty,$ then on $\mathcal L(H)$ there
is a signed measure that is not Jordan, see e.g. \cite[3.2.4]{Dvu},
whilst if $\dim = \aleph_0,$ then by the Dorofeev-Sherstnev Theorem,
every $\sigma$-additive signed measure on $\mathcal L(H)$ is Jordan,
\cite[Thm 3.2.20]{Dvu}.

Proposition \ref{pr:3.2} says that a signed measure $m$ on a pseudo
effect algebra $E$ with (RDP) is Jordan iff $m$ is relatively
bounded.

Given two signed measure $m_1,m_2 \in \mathcal M(E)$, we define
$m_1\le^+ m_2$ whenever $m_2-m_1$ is a positive measure. Then
$\le^+$ is a partial order on $\mathcal M(E)$ and $\mathcal M(E)$ is
an Abelian po-group with respect to this partial order.

Let $(G;+,0,\le)$ be a  po-group. A subgroup $H$ of $G$ is said to
be {\it convex} if from $x\le y \le z,$ where $x,z\in H$ and $y \in
G,$ we have $y \in H.$ An {\it o-ideal} is any directed convex
subgroup of $G.$

\begin{proposition}\label{pr:3.3}
Let $E$ be a pseudo effect algebra with {\rm (RDP)},  let $\mathcal
J(E)$ be the set of all Jordan signed measures  on $E.$  Then
$\mathcal J(E)$ is a nonempty o-ideal of the po-group $\mathcal
M(E).$
\end{proposition}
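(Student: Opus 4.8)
The plan is to verify the three defining properties in turn: $\mathcal J(E)$ is a subgroup, it is convex, and it is directed. Nonemptiness was already observed (the zero measure is Jordan).

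\textbf{Subgroup.} I would argue $\mathcal J(E)$ is a subgroup of $\mathcal M(E)$ directly from the definition. If $m,m'\in\mathcal J(E)$, write $m=m_1-m_2$ and $m'=m_1'-m_2'$ with all four summands in $\mathcal M(E)^+$. Then $m-m'=(m_1+m_2')-(m_2+m_1')$, and since $\mathcal M(E)^+$ is closed under addition (a pointwise sum of positive measures is a positive measure), both bracketed terms lie in $\mathcal M(E)^+$; hence $m-m'\in\mathcal J(E)$. Also $0\in\mathcal J(E)$. So $\mathcal J(E)$ is a subgroup, and it is clearly a subgroup of the Abelian po-group $\mathcal M(E)$ equipped with the order $\le^+$.

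\textbf{Convexity.} Suppose $m\le^+ m'\le^+ m''$ with $m,m''\in\mathcal J(E)$ and $m'\in\mathcal M(E)$. I must show $m'$ is Jordan. By Proposition \ref{pr:3.2} it suffices to show $m'$ is relatively bounded. Given $W\subseteq[a,b]$ in $E$, I want to bound $m'(W)$ in $\mathbb R$. The inequalities $m\le^+ m'\le^+ m''$ mean $m'-m$ and $m''-m'$ are positive measures, hence monotone; so for any $x\in W$ with $a\le x\le b$ we get $m(x)-m(a)\le$ well, more carefully: from $m'-m\ge^+0$ monotone, $m'(x)-m(x)\le m'(b)-m(b)$ is not quite what I want — instead use that $m'(x)-m(x)\ge m'(a)-m(a)$ and $m''(x)-m'(x)\ge m''(a)-m'(a)$, equivalently $m'(x)\le m''(x)-m''(a)+m'(a)$. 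Combining, $m'(a)+ (m(x)-m(a))\le m'(x)\le m'(a)+(m''(x)-m''(a))$. Since $m$ and $m''$ are Jordan, they are relatively bounded, so $m(W)$ and $m''(W)$ are bounded in $\mathbb R$; the displayed two-sided estimate then forces $m'(W)$ to be bounded. Hence $m'$ is relatively bounded, so Jordan.

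\textbf{Directedness.} I must show any two elements of $\mathcal J(E)$ have an upper bound in $\mathcal J(E)$; since $\mathcal J(E)$ is a subgroup it is enough to show every $m\in\mathcal J(E)$ has an upper bound in $\mathcal J(E)^+$, e.g. that $0\le^+ m^+$ for some Jordan $m^+$ with $m\le^+ m^+$. But this is immediate from the definition of Jordan: write $m=m_1-m_2$ with $m_1,m_2\in\mathcal M(E)^+$; then $m_1$ is a positive (hence Jordan) measure with $m_1-m=m_2\ge^+0$, so $m\le^+ m_1$ and $0\le^+ m_1$. Thus $m_1\in\mathcal J(E)^+$ dominates $m$, and for two Jordan measures $m,m'$ with positive dominants $m_1,m_1'$, the sum $m_1+m_1'\in\mathcal M(E)^+\subseteq\mathcal J(E)$ dominates both; so $\mathcal J(E)$ is directed. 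Putting the three parts together, $\mathcal J(E)$ is a nonempty directed convex subgroup, i.e. an o-ideal of $\mathcal M(E)$.

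The only part requiring any real care is convexity, where I lean on Proposition \ref{pr:3.2} to reduce ``Jordan'' to ``relatively bounded'' and then squeeze $m'(W)$ between translated copies of $m(W)$ and $m''(W)$; the bookkeeping with $\minusli$/$\minusre$ versus ordinary differences of real values is the one spot where I would be careful, but since the values live in $\mathbb R$ and measures are monotone on $E$, it is routine. The subgroup and directedness parts are essentially formal consequences of $\mathcal M(E)^+$ being closed under addition.
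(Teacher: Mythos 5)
Your proof is correct, and the subgroup and directedness parts coincide with the paper's (there, both are summarized by saying $\mathcal J(E)$ is the subgroup of $\mathcal M(E)$ generated by the positive measures). Where you genuinely diverge is convexity. The paper argues purely algebraically: after the standard reduction to $0\le^+ m_1\le^+ m_2$ with $m_2\in\mathcal J(E)$, it writes $m_2=m_1'-m_2'$ with $m_1',m_2'$ positive, notes $m_1\le^+ m_2\le^+ m_1'$, and concludes $m_1=m_1'-(m_1'-m_1)$ is itself a difference of positive measures — one line, no appeal to (RDP). You instead invoke Proposition \ref{pr:3.2} to translate ``Jordan'' into ``relatively bounded'' and then squeeze $m'(W)$ between translates of $m(W)$ and $m''(W)$; this is valid under the standing (RDP) hypothesis, but it routes the easy step through the hardest result proved so far (Proposition \ref{pr:3.2} needs Proposition \ref{pr:3.1}, hence (RDP)), whereas the paper's argument would work for the analogous statement in any Abelian po-group of signed measures. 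Incidentally, your squeeze can be simplified: positivity of $m'-m$ and $m''-m'$ already gives the pointwise inequalities $m(x)\le m'(x)\le m''(x)$ for all $x\in E$ (take $a=0$), so no monotonicity bookkeeping over an interval $[a,b]$ is needed — and at that point you are one step from the paper's direct decomposition $m'=m_1'-(m_1'-m')$ anyway. So: correct, slightly heavier than necessary on the convexity step; the paper's route is the more economical and more general one.
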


\begin{proof}
Due to Proposition \ref{pr:3.2}, $\mathcal J(E)$ equals  the
subgroup of $\mathcal M(E)$ generated by the positive measures.
Therefore, $\mathcal J(E) $ is a directed subgroup of $\mathcal
M(E).$

Given $m_1 \in \mathcal M(E)$ and $m_2 \in \mathcal J(E)$ such that
$0\le^+ m_1 \le^+ m_2,$ write $m_2=m_1'-m_2'$ for some  measures
$m_1',m_2' \in \mathcal M(E)^+.$ Since $m_1\le^+ m_2\le ^+  m_1',$
we have $m_1=m_1' - (m_1'-m_1)$ with $m_1'$ and $m_1'-m_1$ positive
measures, and hence, $m_1 \in \mathcal J(E).$ This proves that
$\mathcal J(E)$ is an o-ideal of $\mathcal M(E).$
\end{proof}

\begin{theorem}\label{th:3.4} Let $E$ be a pseudo effect algebra
with {\rm (RDP)}.

\begin{enumerate}

\item[(a)] The group $\mathcal J(E)$ of all Jordan signed measures on $E$ is
an Abelian Dedekind complete $\ell$-group.

\item[(b)] If $\{m_i\}_{i\in I}$ is a nonempty system of $\mathcal J(E)$
that is bounded above, and if $d(x)=\bigvee_i m_i(x)$ for all $x \in
E,$ then
$$ \left(\bigvee_i m_i\right)(x) = \bigvee\{d(x_1)+\cdots + d(x_n):
x= x_1+\cdots + x_n, \ x_1,\ldots, x_n \in E\}
$$
for all $x \in E.$

\item[(c)] If $\{m_i\}_{i\in I}$ is a nonempty system of $\mathcal J(E)$
that is bounded below, and if $e(x)=\bigwedge_i f_i(x)$ for all $x
\in E,$ then
$$ \left(\bigwedge_i m_i\right)(x) = \bigwedge\{e(x_1)+\cdots + e(x_n):
x= x_1+\cdots + x_n, \ x_1,\ldots, x_n \in E\}
$$
for all $x \in E.$

\end{enumerate}
\end{theorem}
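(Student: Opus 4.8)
The plan is to realize $\mathcal J(E)$ as an $\ell$-subgroup of a concrete Dedekind complete $\ell$-group, namely the group of all relatively bounded signed measures, and then verify that suprema and infima are computed by the stated refinement formulas. Since $\mathcal J(E)$ was shown in Proposition \ref{pr:3.3} to be an o-ideal of the Abelian po-group $\mathcal M(E)$, it suffices by the standard fact about o-ideals to prove that $\mathcal J(E)$ is itself lattice ordered and Dedekind complete; an o-ideal that is an $\ell$-group inherits its lattice operations compatibly, and any bounded subset inside an o-ideal has its $\mathcal M(E)$-bounds already inside the o-ideal. So the crux is to produce, for a nonempty family $\{m_i\}_{i\in I}$ in $\mathcal J(E)$ bounded above by some $m\in\mathcal J(E)$, a least upper bound, and to identify it by the formula in (b).

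First I would fix such a family and set $d(x):=\bigvee_i m_i(x)$ for $x\in E$; this supremum exists in $\mathbb R$ because each $m_i\le^+ m$ forces $m_i(x)\le m(x)$, and it is finite below since, fixing one index $i_0$, $d(x)\ge m_{i_0}(x)$. Next I would check that $d$ is subadditive: if $x+y$ is defined then $d(x+y)=\bigvee_i(m_i(x)+m_i(y))\le\bigvee_i m_i(x)+\bigvee_i m_i(y)=d(x)+d(y)$, and $d(0)=0$. Then I would verify that the set $D(x)$ from $(3.1)$, built from this $d$, is bounded above in $\mathbb R$: given a decomposition $x=x_1+\cdots+x_n$, for each $j$ we have $d(x_j)\le m(x_j)$ (since $m$ is an upper bound, hence $m(x_j)\ge m_i(x_j)$ for all $i$), so $\sum_j d(x_j)\le\sum_j m(x_j)=m(x)$; thus $m(x)$ is an upper bound for $D(x)$. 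Now Proposition \ref{pr:3.1} applies and yields a signed measure $m^\vee$ with $m^\vee(x)=\bigvee D(x)$, which is exactly the right-hand side of the formula in (b).

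It remains to show $m^\vee=\bigvee_i m_i$ in $(\mathcal M(E),\le^+)$. For each fixed $i$ and each $x$, taking the trivial one-term decomposition gives $m^\vee(x)\ge d(x)\ge m_i(x)$, and since $m^\vee-m_i$ is additive and nonnegative it is a measure, so $m_i\le^+ m^\vee$; thus $m^\vee$ is an upper bound. To see it is least, let $n\in\mathcal M(E)$ with $m_i\le^+ n$ for all $i$; then for any decomposition $x=x_1+\cdots+x_n$ and any $j$, $n(x_j)\ge m_i(x_j)$ for all $i$ gives $n(x_j)\ge d(x_j)$, whence $\sum_j d(x_j)\le\sum_j n(x_j)=n(x)$; taking the supremum over decompositions gives $m^\vee(x)\le n(x)$, i.e. $m^\vee\le^+ n$. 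Since $\mathcal J(E)$ is an o-ideal containing $m^\vee$ (it lies between $m_{i_0}$ and $m$, both in $\mathcal J(E)$), this shows $\mathcal J(E)$ is a Dedekind complete lattice-ordered group, proving (a) and (b). Part (c) then follows by applying (b) to $\{-m_i\}_{i\in I}$ and using $\bigwedge_i m_i=-\bigvee_i(-m_i)$ together with $e(x)=-\bigl(\bigvee_i(-m_i)(x)\bigr)$; I would just note this duality rather than repeat the computation.

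The main obstacle I anticipate is the least-upper-bound verification — specifically making sure that an arbitrary upper bound $n$ in $\mathcal M(E)$ (a priori only a signed measure, not obviously Jordan) dominates $m^\vee$; the refinement structure of (RDP) is not needed for that direction, only the pointwise domination $n(x_j)\ge d(x_j)$ summed over a decomposition, but one must be careful that this holds for \emph{every} decomposition so that the supremum $\bigvee D(x)=m^\vee(x)$ is genuinely bounded by $n(x)$. A secondary point to handle cleanly is the reduction to o-ideals: one should cite that a directed convex subgroup of an Abelian po-group which happens to be lattice ordered has the same finite and Dedekind-complete lattice operations as computed inside it, so that "$\bigvee$ in $\mathcal J(E)$" coincides with the $m^\vee$ constructed above.
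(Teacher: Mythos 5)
Your proposal is correct and follows essentially the same route as the paper: form the subadditive envelope $d(x)=\bigvee_i m_i(x)$, bound the refinement sums $D(x)$ by an upper bound of the family, invoke Proposition \ref{pr:3.1} to get the signed measure $\bigvee D(x)$, verify it is the least upper bound, and obtain infima and Dedekind completeness by the negation duality together with directedness of $\mathcal J(E)$. The only (harmless) deviations are that you check minimality against arbitrary upper bounds in $\mathcal M(E)$ and place the supremum in $\mathcal J(E)$ via o-ideal convexity, where the paper writes it directly as $m_i$ plus a positive measure.
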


\begin{proof} Let $t \in \mathcal J(E)$ be an upper bound for $\{m_i\}.$
For any $x \in E$, we have $m_i(x)\le t(x),$ so that the mapping
$d(x)=\bigvee_i m_i(x)$ defined on $E$ is a a subadditive mapping.
For any $x \in E$ and any decomposition $x = x_1+\cdots + x_n$ with
all $x_i \in E,$ we conclude $d(x_1)+\cdots+ d(x_n)\le t(x_1)+\cdots
+ t(x_n)=t(x).$  Hence, $t(x)$ is an upper set for $D(x)$ defined by
(3.1).

Proposition \ref{pr:3.1} entails there is a signed measure $m$ on
$E$ such that $m(x)=\bigvee D(x).$ For every $x \in E$ and every
$m_i$ we have $m_i(x)\le d(x)\le m(x)$ that gives $m_i \le^+ m.$ The
mappings $m-m_i$ are positive measures belonging bo $\mathcal
M(E)^+$ that gives $m \in \mathcal J(E).$ If $h \in \mathcal J(E)$
such that $m_i\le^+ h$ for each $i \in I,$ then $d(x)\le h(x)$ for
any $x \in E.$ As above, we can show that $h(x)$ is also an upper
bound for $D(x)$, whence $m(x)\le h(x)$ for any $x \in E$ that gives
$m\le ^+h.$ In other words, we have proved that $m$ is the supremum
of $\{m_i\}_{i\in I},$ and its form is given by (b).

Applying the order anti-automorphism $z\mapsto - z$ in $\mathbb R$,
we see that infima exist in $\mathcal J(E)$ for any bounded below
system $\{m_i\}_{i\in I}$, and their form is given by (c).

By Proposition \ref{pr:3.2}, $\mathcal J(E)$ is directed, combining
(b) and (c), we see that $\mathcal J(E)$ is an Abelian Dedekind
complete $\ell$-group.
\end{proof}

For finite joins and meets of Jordan signed measures, Theorem
\ref{th:3.4} can be reformulated as follows.

\begin{theorem}\label{th:3.5} If $E$ is a pseudo effect algebra with
{\rm (RDP)}, then the group $\mathcal J(E)$ of all Jordan signed
measures on $E$ is an Abelian Dedekind complete lattice ordered real
vector space. Given $m_1,\ldots, m_n \in \mathcal J(E)$,

\begin{eqnarray*}
\left( \bigvee_{i=1}^n m_i\right)(x) = \sup\{m_1(x_1)+\cdots
+m_n(x_n):
x = x_1+\cdots +x_n,\ x_1,\ldots, x_n \in E\},\\
\left( \bigwedge_{i=1}^n m_i \right)(x) = \inf\{m_1(x_1)+\cdots
+m_n(x_n):
x = x_1+\cdots +x_n,\ x_1,\ldots, x_n \in E\},\\
\end{eqnarray*}
for all $x \in E.$
\end{theorem}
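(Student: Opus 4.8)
The plan is to derive everything from Theorem \ref{th:3.4}; the only genuinely new content is the explicit ``convolution'' form of finite joins and meets. First, for the algebraic structure: $\mathcal M(E)$ is a real vector space under pointwise operations, and $\mathcal J(E)$ is a linear subspace, since if $m=m_1-m_2$ with $m_1,m_2\in\mathcal M(E)^+$, then for $\lambda\ge 0$ one has $\lambda m=(\lambda m_1)-(\lambda m_2)$ with $\lambda m_i\in\mathcal M(E)^+$, while for $\lambda<0$ one uses $\lambda m=((-\lambda)m_2)-((-\lambda)m_1)$. Because $\mathcal M(E)^+$ is closed under multiplication by nonnegative reals, the order $\le^+$ is compatible with the vector space operations, so $\mathcal J(E)$ is an ordered real vector space; by Theorem \ref{th:3.4}(a) it is a Dedekind complete $\ell$-group, hence a Dedekind complete lattice ordered real vector space.

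Next I would prove the formula for $\bigvee_{i=1}^n m_i$ by induction on $n$, the case $n=1$ being trivial. For the inductive step, write $\bigvee_{i=1}^{n}m_i=p\vee m_n$ with $p:=\bigvee_{i=1}^{n-1}m_i$. Granting the two-measure case below, $(p\vee m_n)(x)=\sup\{p(y_1)+m_n(y_2):x=y_1+y_2\}$; substituting the induction hypothesis $p(y_1)=\sup\{m_1(w_1)+\cdots+m_{n-1}(w_{n-1}):y_1=w_1+\cdots+w_{n-1}\}$ and passing to the two suprema simultaneously collapses them into a single supremum over decompositions $x=w_1+\cdots+w_{n-1}+y_2$, which is exactly the claimed expression for $n$.

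For the case $n=2$ I would apply Theorem \ref{th:3.4}(b) to $m_1,m_2$ with $d(z)=m_1(z)\vee m_2(z)$, giving $(m_1\vee m_2)(x)=\sup\{d(z_1)+\cdots+d(z_k):x=z_1+\cdots+z_k\}$. The inequality ``$\ge$'' against $\sup\{m_1(y_1)+m_2(y_2):x=y_1+y_2\}$ is immediate from $m_1(y_1)+m_2(y_2)\le d(y_1)+d(y_2)$. For ``$\le$'', fix $x=z_1+\cdots+z_k$ and put $A=\{j:m_1(z_j)\ge m_2(z_j)\}$, so $\sum_j d(z_j)=\sum_{j\in A}m_1(z_j)+\sum_{j\notin A}m_2(z_j)$. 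The key tool is a reordering lemma valid in any pseudo effect algebra: if $x=z_1+\cdots+z_k$ and $1\le j<k$, then $z_j+z_{j+1}$ is defined and, by axiom (iii), equals $z_{j+1}+e$ for some $e\in E$; by associativity (axiom (i)) this produces the decomposition $x=z_1+\cdots+z_{j-1}+z_{j+1}+e+z_{j+2}+\cdots+z_k$, and every signed measure $m$ satisfies $m(e)=m(z_j)$, so the swap leaves the value of $m_1$ and of $m_2$ on each summand unchanged. Iterating adjacent transpositions I may assume the summands with $j\in A$ occupy the initial positions; letting $y_1$ be the sum of those summands and $y_2$ the sum of the remaining ones (each a consecutive block, an empty block being $0$) yields $x=y_1+y_2$ with $m_1(y_1)+m_2(y_2)=\sum_j d(z_j)$, proving ``$\le$''. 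Finally the meet formula follows by applying the order anti-automorphism $z\mapsto -z$ of $\mathbb R$, i.e.\ $\bigwedge_i m_i=-\bigvee_i(-m_i)$, which converts the supremum formula into the infimum formula.

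The main obstacle is precisely this regrouping step in the two-measure case: since $+$ on $E$ is partial and non-commutative, the summands of a decomposition cannot be permuted freely, and it is axiom (iii) that makes adjacent transpositions possible — harmless here because only the values of $m_1$ and $m_2$ on the summands are relevant, and these are preserved. Everything else is a routine application of Theorem \ref{th:3.4} and of the observation, already used in the excerpt, that $m(e)=m(z_j)$ whenever $a+b=b+e$.
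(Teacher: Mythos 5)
Your proposal is correct and follows essentially the same route as the paper: both rest on Theorem \ref{th:3.4}(b) applied to $d(x)=\bigvee_i m_i(x)$ together with the key regrouping step, namely partitioning the summands of a decomposition according to which $m_i$ attains the maximum and then collecting each class into a single block by means of conjugates supplied by axiom (iii), whose measure values are unchanged because $\mathbb R$ is Abelian. The only difference is organizational — you treat $n=2$ and induct, collapsing nested suprema, while the paper handles all $n$ blocks at once via an $\epsilon$-argument — and this does not affect the substance.
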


\begin{proof} Due to Theorem \ref{th:3.4}, $\mathcal J(E)$ is an
Abelian Dedekind complete $\ell$-group. It is evident that it is a
Riesz space, i.e., a lattice ordered real vector space.

Take $m_1,\ldots,m_n \in \mathcal J(E)$ and let $m = m_1\vee \cdots
\vee m_n.$ For any $x \in E$ and $x=x_1+\cdots+x_n$ with
$x_1,\ldots, x_n \in E,$ we have  $m_1(x_1)+\cdots + m_n(x_n) \le
m(x_1)+\cdots + m(x_n) = m(x).$  Due to Theorem \ref{th:3.4}, given
an arbitrary real number $\epsilon >0$, there is a decomposition $x
= y_1+\cdots+y_k$ with $y_1,\ldots,y_k \in E$ such that

$$ \sum_{j=1}^k \max\{m_1(y_j),\ldots,m_n(y_j)\} > m(x)-\epsilon.
$$

If $k < n$, we can add the zero elements to the decomposition, if
necessary, so that without loss of generality, we can assume that
$k\ge n.$

We note that if $a,b \in E$ are given such that $a+b$ is defined in
$E$, the elements $a',a''\in E$ such that $a+b= b+a'$ and $b+a
=a''+b$ are said to be (right and left) conjugates of $a$ by $b$.
Since $\mathbb R$ is Abelian, for any $h\in \mathcal J(E),$
$h(a')=h(a)=h(a'').$

We decompose the set $\{1,\ldots,k\}$ into mutually disjoint sets
$J(1),\ldots,J(n)$ such that
$$J(i):=\{j \in \{1,\ldots,k\}: \max\{m_1(y_j),\ldots, m_n(y_j)\}=
m_i(y_j)\}.
$$
Assume $J(1)=\{j_{t_1},\ldots, j_{n_1}\}.$  Then the element
$x_1:=x_{j_{t_1}}+\cdots+ x_{j_{n_1}}$ is defined in $E$.

The element $x$ can be expressed in the form $x=x_{j_{t_1}}+\cdots+
x_{j_{n_1}} + x_j' +\cdots + x_k',$ where $x_j',\ldots, x_k'\in E$
are conjugates of $x_j,\ldots,x_k.$

In a similar way, let $J(2)=\{j_{t_2},\ldots, j_{n_2}\}$ and let
$x_2 = y_{j_{t_2}}+\cdots +y_{j_{n_2}}.$ Again, we can express $x$
in the form $x = x_1 + x_2 + y_s'' +\cdots + y_k'',$ where $y_t''$'s
are appropriate  conjugates of $y_s',\ldots, y_k'.$ Processing in
this way for each $J(i) = \{j_{t_i},\ldots, j_{n_i}\},$ we define
the element $x_i = c_{t_{j_{t_i}}} +\cdots + c_{t_{j_{n_i}}},$ where
$c_{t_{j_s}}$ is an appropriate conjugate of the element
$y_{t_{j_s}}.$  Then $x = x_1+\cdots + x_n,$ and

$$ \sum_{i=1}^n m_i(x_i) = \sum_{i=1}^n \sum_{j \in
J(i)}m_i(y_j)=\sum_{i=1}^k\max\{m_1(y_j),\ldots, m_n(y_j)\} >
m(x)-\epsilon.$$

This implies $m(x)$ equals the given supremum.

The formula for $(m_1\wedge \cdots \wedge m_n)(x)$ can be obtained
applying   the order anti-automorphism $z\mapsto -z$ holding in
$\mathbb R.$
\end{proof}

\section{Jordan Signed Measures}

Using the results of the previous Section, we will show some
interesting properties of signed measures, like a Jordan
decomposition, variation, etc.

Let $E$ be a pseudo effect algebra with (RDP), and let $0:E\to
\{0\}$ be the zero signed measure.  Then $\mathcal J(E)$ is a
nontrivial Abelian $\ell$-group, i.e., $\mathcal J(E) \supset \{0\}$
iff $E$ admits at least one state.  Moreover, $0$ is the zero
element of the $\ell$-group $\mathcal J(E).$  We recall that if $E$
is an effect algebra with (RDP), then $\mathcal S(E)$ is always
nonempty.

We say that a convex subset $F$ of a convex set $K$ is a {\it face}
if $x=\lambda x_1+(1-\lambda)x_2 \in F,$ $0<\lambda<1,$ entail
$x_1,x_2\in F.$ For example, if $x$ is an extreme point of $K,$ then
the singleton $\{x\}$ is a face, and for any $X \subseteq K,$ there
is the face generated by $X.$ Due to \cite[Prop 5.7]{Goo}, the face
$F$ generated by $X$ is the set of those points $x \in K$ for which
there exists a positive convex combination $\lambda x + (1-\lambda)
y= z$ with $y \in K$ and $z$ belonging to the convex hull of $X.$

In particular, the face of $K$ generated by a point $z\in K$
consists precisely of those points $x \in K$ for which there exists
a positive convex combination $\lambda x +\beta y =z$ with $y \in
K.$

\begin{lemma}\label{le:ex3.7}  Let $E$ be a pseudo effect algebra
and let $X$ be a subset of $\mathcal S(E).$ Then a state $s\in
\mathcal S(E)$ belongs to the face generated by $X$ if and only if
$s\le ^+ \alpha t$ for some positive constant $\alpha$ and some
state $t$ in the convex hull of $X.$
\end{lemma}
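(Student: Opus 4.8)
The statement is the standard description of the face generated by a subset $X$ of the state space, now in the pseudo effect algebra setting, so the natural strategy is to transport the general convexity fact \cite[Prop 5.7]{Goo} (already quoted in the excerpt) into the language of the partial order $\le^+$ on $\mathcal M(E)$. First I would prove the ``if'' direction: suppose $s\le^+\alpha t$ for some positive constant $\alpha$ and some state $t$ in the convex hull of $X$. Evaluating at $1$ gives $1=s(1)\le\alpha t(1)=\alpha$, so $\alpha\ge 1$. If $\alpha=1$ then $t-s$ is a positive measure with $(t-s)(1)=0$, hence $t-s=0$ and $s=t$ already lies in the convex hull of $X$, thus in the face. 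If $\alpha>1$, set $\beta=1/\alpha\in(0,1)$ and consider the positive measure $r:=(\alpha t-s)/(\alpha-1)$; one checks $r(1)=1$, so $r\in\mathcal S(E)$, and $\beta s+(1-\beta)r=t$, which exhibits $t$ — a point of the convex hull of $X$ — as a positive convex combination involving $s$. By the description of the face generated by a point quoted just before the lemma (applied with $z$ ranging over the convex hull of $X$, equivalently directly to the face generated by $X$), this places $s$ in the face generated by $X$.

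For the ``only if'' direction I would run the argument in reverse. If $s$ belongs to the face generated by $X$, then by \cite[Prop 5.7]{Goo} there is a positive convex combination $\lambda s+(1-\lambda)y=z$ with $\lambda\in(0,1]$, $y\in\mathcal S(E)$, and $z$ in the convex hull of $X$. Then $\lambda s=z-(1-\lambda)y\le^+ z$ since $(1-\lambda)y$ is a positive measure; hence $s\le^+\lambda^{-1}z=\alpha t$ with $\alpha:=\lambda^{-1}>0$ and $t:=z$, a state in the convex hull of $X$. This is exactly the asserted inequality.

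\textbf{Main obstacle.} There is little genuine difficulty here; the content is entirely convex-geometric and the noncommutativity of $E$ never enters, because signed measures take values in the abelian group $\mathbb R$ and $\mathcal M(E)$ is an abelian po-group under $\le^+$ (as recorded in the excerpt). The only points requiring care are bookkeeping ones: verifying that the auxiliary combinations produced are genuine \emph{positive} convex combinations (so that the relevant maps are states, i.e. positive measures of total mass $1$), handling the degenerate case $\lambda=1$ (equivalently $\alpha=1$) separately so that the divisions by $\alpha-1$ or $1-\lambda$ are legitimate, and making sure the convex hull of $X$ — rather than $X$ itself — is what appears, so that the ``face generated by $X$'' description from \cite[Prop 5.7]{Goo} applies verbatim. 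None of these obstruct the argument; the proof is a short translation exercise.
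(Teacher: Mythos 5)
Your proof is correct and follows essentially the same route as the paper: the ``only if'' direction uses the quoted description of the generated face to get $\lambda s+(1-\lambda)s'=t$ and hence $s\le^+ t/\lambda$, and the ``if'' direction normalizes the positive measure $\alpha t-s$ to a state and rewrites $t$ as a convex combination $\lambda s+(1-\lambda)s'$ with $\lambda=1/\alpha$, exactly as in the paper. Your explicit treatment of the degenerate case $\alpha=1$ is a small extra precaution the paper glosses over, but it is not a different argument.
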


\begin{proof}  If a state $s$ belongs to the face generated by $X,$
by the note just before this lemma, there exists a positive convex
combination $\lambda s+(1-\lambda)s' = t,$ where $s'\in \mathcal
S(E)$ and $t$ belongs to the convex hull of $X.$ Then $\lambda s
\le^+ t$ so that $s\le ^+ t/\lambda.$

Conversely, if $s \le^+ \alpha t$ for some $\alpha >0$ and some
state $t$ in the convex hull of $X.$  Then $\alpha t - s$ is a
measure, so that $\alpha t - s = \beta s'$ for some $\beta \ge 0.$
Now $s+\beta s'= \alpha t$ and $1+\beta = s(1) + \beta s'(1) =
\alpha t(1) =\alpha$ that yields $1/\alpha + \beta/\alpha = 1.$ This
gives $0\le \lambda:= 1/\alpha \le 1$ and $\lambda s +
(1-\lambda)s'=t.$ Since $t$ belongs to the face generated by $X,$ so
does $s.$
\end{proof}

Now we show that if $s_1$ and $s_2$ are two  states on $E,$ then
$s_1\wedge s_2$ and $s_1\vee s_2$ are not necessarily states.

\begin{proposition}\label{pr:ex3.8}  Let $E$ be a pseudo effect
algebra with {\rm (RDP)}.  Let $F_1$ and $F_2$ be the faces
generated by states $s_1$ and $s_2,$ respectively, on $E.$ The
following statements are equivalent:

\begin{enumerate}

\item[(i)] $F_1 \cap F_2 = \emptyset.$ 

\item[(ii)] $s_1\wedge s_2 = 0.$

\item[(iii)] $s_1\vee s_2 = s_1 + s_2.$

\item[(iv)] Given $x \in E$ and any $\epsilon >0,$ there exists
$x_1,x_2 \in E$ such that $x = x_1+x_2$ and $s(x\minusli
x_i)<\epsilon.$

\end{enumerate}

In particular, if $s_1$ and $s_2$ are two distinct extremal state on
$E,$ then $s_1\wedge s_2 =0.$
\end{proposition}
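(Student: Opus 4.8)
The plan is to prove the three equivalences (ii) $\Leftrightarrow$ (iii), (i) $\Leftrightarrow$ (ii), and (ii) $\Leftrightarrow$ (iv) one at a time, since by Section 3 all the needed lattice machinery is already available. The equivalence (ii) $\Leftrightarrow$ (iii) is immediate: by Theorem \ref{th:3.4} the group $\mathcal J(E)$ is an Abelian $\ell$-group, so the identity $a+b=(a\vee b)+(a\wedge b)$ holds with $a=s_1$, $b=s_2$; hence $s_1\vee s_2=s_1+s_2$ if and only if $s_1\wedge s_2=0$.

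For (ii) $\Rightarrow$ (i): suppose $s\in F_1\cap F_2$. Since the convex hull of a singleton $\{s_i\}$ is $\{s_i\}$, Lemma \ref{le:ex3.7} yields $\alpha,\beta>0$ with $s\le^+\alpha s_1$ and $s\le^+\beta s_2$; putting $\gamma=\max\{\alpha,\beta\}$ we get $s\le^+\gamma s_1$ and $s\le^+\gamma s_2$, hence $s\le^+(\gamma s_1)\wedge(\gamma s_2)=\gamma(s_1\wedge s_2)=0$. But $s\ge^+0$ and $s(1)=1$, a contradiction, so $F_1\cap F_2=\emptyset$. Conversely I prove (i) $\Rightarrow$ (ii) contrapositively: if $s_1\wedge s_2\ne 0$, then $s_1\wedge s_2$ is a positive measure which is not identically $0$, so $\alpha:=(s_1\wedge s_2)(1)>0$ — here one uses that a positive (hence monotone) measure $m$ with $m(1)=0$ vanishes everywhere. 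Then $s:=\alpha^{-1}(s_1\wedge s_2)$ is a state with $s\le^+\alpha^{-1}s_1$ and $s\le^+\alpha^{-1}s_2$, so by Lemma \ref{le:ex3.7} $s\in F_1\cap F_2$, giving $F_1\cap F_2\ne\emptyset$.

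For (ii) $\Leftrightarrow$ (iv) I would invoke the meet formula of Theorem \ref{th:3.5},
$$(s_1\wedge s_2)(x)=\inf\{s_1(x_1)+s_2(x_2):\ x=x_1+x_2,\ x_1,x_2\in E\}.$$
Since $s_1\wedge s_2$ is a positive measure, $s_1\wedge s_2=0$ is equivalent to $(s_1\wedge s_2)(x)=0$ for all $x\in E$, and because the summands $s_i(x_i)$ are nonnegative, this says exactly that for every $x\in E$ and every $\epsilon>0$ there is a decomposition $x=x_1+x_2$ with $s_1(x_1)+s_2(x_2)<\epsilon$. To pass to (iv) one notes that when $x=x_1+x_2$ both $x_1,x_2\le x$, that $x\minusli x_i$ is a conjugate of the complementary summand, and that (as recorded in the proof of Theorem \ref{th:3.5}) conjugate elements take the same value under any Jordan signed measure; hence $s_j(x\minusli x_i)$ equals $s_j$ evaluated on the appropriate summand, and the two formulations coincide, up to harmlessly replacing $\epsilon$ by $2\epsilon$.

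Finally, the displayed "in particular" clause: if $s_1\ne s_2$ are extremal states, then each $\{s_i\}$ is itself a face and therefore equals the face $F_i$ it generates, so $F_1\cap F_2=\{s_1\}\cap\{s_2\}=\emptyset$, and (i) $\Rightarrow$ (ii) gives $s_1\wedge s_2=0$. I expect the only slightly delicate points to be the bookkeeping in (ii) $\Leftrightarrow$ (iv), matching the two-summand infimum with the $\minusli$-form, and the small remark that a nonzero positive measure cannot vanish at $1$, which is what legitimizes the rescaling step in (i) $\Leftrightarrow$ (ii).
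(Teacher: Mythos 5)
Your proof is correct and follows essentially the same route as the paper: the $\ell$-group identity $(s_1\vee s_2)+(s_1\wedge s_2)=s_1+s_2$ for (ii)$\Leftrightarrow$(iii), Lemma \ref{le:ex3.7} with rescaling/normalization of $s_1\wedge s_2$ for (i)$\Leftrightarrow$(ii), the meet formula of Theorem \ref{th:3.5} plus the fact that conjugate elements get the same value under any signed measure for (ii)$\Leftrightarrow$(iv), and disjoint singleton faces for the extremal-state clause. Your explicit remark that a nonzero positive measure cannot vanish at $1$ just makes precise a step the paper leaves implicit, so no substantive difference.
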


\begin{proof}
(i) $\Rightarrow$ (ii). Assume that $s_1$ and $s_2$ belongs to
mutually disjoint faces of $\mathcal S(E).$ If $s_1\wedge s_2 >0,$
there is a state $s$ and a real number $\alpha >0$ such that
$s_1\wedge s_2 = \alpha s.$ Then $\alpha s\le^+ s_1$ and $\alpha
s\le ^+ s_2$ and $s\le^+ s_1/\alpha$ and $s\le^+s_2/\alpha.$ Lemma
\ref{le:ex3.7} implies $s$ belongs to the face generated by $s_1$
and as well to the one generated by $s_1$ that is absurd, so that
$s_1 \wedge s_2=0.$

(ii) $\Rightarrow$ (i). Let $F_1$ and $F_2$ be the faces generated
by $s_1$ and $s_2,$ respectively. We state that $F_1 \cap F_2 =
\emptyset.$ If not, there is a state $s \in F_1 \cap F_2$ and by
Lemma \ref{le:ex3.7}, $s\le ^+ \alpha_1 s_1$ and $s\le ^+\alpha_2
s_2.$ If $\alpha = \max\{\alpha_1,\alpha_2\}$, then $s\le^+ \alpha
s_1$ and $s\le^+ \alpha s_2$ and $s/\alpha \le^+ s_1, s_2$ and
therefore, $s/\alpha \le^+ s_1 \wedge s_2 = 0$ that gives a
contradiction.

(i) $\Leftrightarrow$ (iii). It follows from the basic properties of
$\ell$-groups, see e.g. \cite[C p.67]{Fuc},  $(s_1\vee s_2)+ (s_1
\wedge s_2) = s_1 + s_2.$

(ii) $\Rightarrow$ (iv).  By Theorem \ref{th:3.5}, given $x$ and
$\epsilon >0,$ there are $x_1',x_2'\in E$ such that $x=x_2'+x_1'$
and $s_1(x_2')+s_2(x_1') < \epsilon.$  Then $x_2'= x\minusli x_1'$
and $x_1'= x_2'\minusre x,$ and $s_1(x_2')= s_1(x\minusli x_1')
<\epsilon$ and $s_2(x_1')= s_1(x_2'\minusre x) = -s_1(x_2')+s_1(x)=
s_1(x)-s_1(x_2') = s_1(x\minusli x_2')<\epsilon.$

But $x= x_1'+ x_2'',$ where $x_2''$ is a conjugate of $x_2'$ by
$x_1'.$ Then $s_2(x_2'')= s_2(x_2').$ If we set $x_1 = x_1'$ and
$x_2= x_2'',$ we have $s_1(x\minusli x_1)=s_1(x\minusli
x_1')<\epsilon,$ $s_2(x\minusli x_2)= s_2(x)- s_2(x_2)= s_2(x)-
s_2(x_2'') = s_2(x)-s_2(x_2') = s_2(x\minusli x_2')<\epsilon$ and
$x= x_1+x_2.$

(iv) $\Rightarrow$ (ii).  Given $\epsilon >0$ and $x \in E,$ there
is a decomposition $x=x_1+x_2$ such that $s_i(x\minusli x_i)
<\epsilon/2$ for $i=1,2.$ Hence, $x= x_2 + x_1',$ where $x_1'$ is a
conjugate of $x_1.$ If we set $y = x_2$ and $z=x_1',$ then using
$x_1=x\minusli x_2$ and $x_2= x_1\minusre x,$ we have $s_1(y) =
s_1(x_2) = s_1(x_1\minusre x)= s_1(x\minusli x_1)<\epsilon/2$ and
$s_2(z)= s_2(x_1') = s_2(x\minusli x_2)<\epsilon /2$, so that
$s(y)+s(z)< \epsilon.$ By Theorem \ref{th:3.5}, this means
$s_1\wedge s_2 =0.$

Finally, if $s_1$ and $s_2$ are two distinct extremal states, then
the singletons $\{s_1\}$ and $\{s_2\}$ are mutually disjoint faces.
Hence, $s_1 \wedge s_2 = 0.$
\end{proof}

\begin{proposition}\label{co:ex3.9}
Let $s_1,s_2$ be two states on a pseudo effect algebra $E$ with {\rm
(RDP)}.  Then $s_1 \wedge s_2 \in  \mathcal S(E)$ if and only if
$s_1=s_2$ and if and only if $s_1\vee s_2$ is a state.

Given $\lambda \in [0,1],$ let $s_\lambda:=\lambda s_1
+(1-\lambda)s_2 \in \mathcal S(E).$  Then $s_1\wedge s_2 =
\bigwedge\{s_\lambda: \lambda \in [0,1]\} \in \mathcal M^+(E).$

\end{proposition}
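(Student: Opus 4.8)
The plan is to prove the three-way equivalence first and then the infimum formula. For the equivalence, the implications $s_1=s_2\Rightarrow s_1\wedge s_2\in\mathcal S(E)$ and $s_1=s_2\Rightarrow s_1\vee s_2\in\mathcal S(E)$ are immediate, since then $s_1\wedge s_2=s_1=s_1\vee s_2$, so the work is in the two converses. Suppose $s_1\wedge s_2\in\mathcal S(E)$; I would note that $s_1,s_2$ are (positive) measures, hence belong to $\mathcal J(E)$, and $s_1\wedge s_2\le^+ s_i$, so $m_i:=s_i-(s_1\wedge s_2)$ is a positive measure with $m_i(1)=s_i(1)-(s_1\wedge s_2)(1)=1-1=0$. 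Since a measure is monotone and $0\le a\le 1$ for every $a\in E$, we get $0\le m_i(a)\le m_i(1)=0$, so $m_i\equiv 0$ and $s_i=s_1\wedge s_2$ for both $i$, whence $s_1=s_2$. The implication $s_1\vee s_2\in\mathcal S(E)\Rightarrow s_1=s_2$ is symmetric, with $s_i\le^+ s_1\vee s_2$; alternatively, evaluating the $\ell$-group identity $(s_1\vee s_2)+(s_1\wedge s_2)=s_1+s_2$ (used already in the proof of Proposition~\ref{pr:ex3.8}) at $1$ shows directly that $s_1\wedge s_2\in\mathcal S(E)\Leftrightarrow s_1\vee s_2\in\mathcal S(E)$, which closes the loop.

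For the second statement, I would first observe that $\{s_\lambda:\lambda\in[0,1]\}$ is a nonempty family of states, hence a nonempty subset of $\mathcal J(E)$ bounded below by $0$; by Theorem~\ref{th:3.4} the group $\mathcal J(E)$ is Dedekind complete, so $\bigwedge_{\lambda\in[0,1]}s_\lambda$ exists in $\mathcal J(E)$. Since the family contains $s_0=s_2$ and the value at $\lambda=1$, which is $s_1$, we have $\bigwedge_{\lambda\in[0,1]}s_\lambda\le^+ s_1\wedge s_2$. For the reverse inequality I would use that $\mathcal J(E)$ is a Riesz space (Theorem~\ref{th:3.5}): from $s_1\wedge s_2\le^+ s_1$, $s_1\wedge s_2\le^+ s_2$ and $\lambda,1-\lambda\ge 0$, multiplying by these scalars and adding yields
\[
s_1\wedge s_2=\lambda(s_1\wedge s_2)+(1-\lambda)(s_1\wedge s_2)\le^+\lambda s_1+(1-\lambda)s_2=s_\lambda
\]
for every $\lambda\in[0,1]$, hence $s_1\wedge s_2\le^+\bigwedge_{\lambda\in[0,1]}s_\lambda$. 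Combining the two inequalities gives $s_1\wedge s_2=\bigwedge_{\lambda\in[0,1]}s_\lambda$, and since $s_1,s_2\ge^+ 0$ we have $s_1\wedge s_2\ge^+ 0$, i.e.\ $s_1\wedge s_2\in\mathcal M(E)^+$.

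The whole argument is short once the Dedekind complete $\ell$-group and Riesz space structure of $\mathcal J(E)$ from Theorems~\ref{th:3.4} and~\ref{th:3.5} is in hand, so I do not expect a real obstacle; the only points needing a moment's attention are the elementary fact that a positive measure vanishing at the unit $1$ vanishes identically (which uses $0\le a\le 1$, valid in any pseudo effect algebra) and that multiplication by a scalar in $[0,1]$ is order preserving in $\mathcal J(E)$, both of which are routine.
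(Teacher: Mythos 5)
Your proof is correct and follows essentially the same route as the paper: the converse direction via the positive measure $s_i-(s_1\wedge s_2)$ vanishing at $1$, the join equivalence via the $\ell$-group identity $(s_1\vee s_2)+(s_1\wedge s_2)=s_1+s_2$, and the infimum formula via $s_1\wedge s_2\le^+\lambda s_1+(1-\lambda)s_2$ together with specializing $\lambda=0,1$. The extra details you supply (monotonicity of positive measures, Dedekind completeness guaranteeing the existence of the infimum) are routine and consistent with the paper's argument.
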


\begin{proof}
Let $s = s_1 \wedge s_2 \in \mathcal S(E).$ Then $s\le^+s_1$ and
$s\le^+ s_2$. Therefore, $s_i-s$ is a positive measure. Since
$s_i(1)-s(1)=0$ for $i=1,2,$ we see that $s_1=s=s_2.$ The converse
statement is evident. The second equivalency follows from the
$\ell$-group equality $(s_1\wedge s_2)+(s_1\vee s_2)=s_1 +s_2.$

Let $s = s_1\wedge s_2 \in \mathcal M^+(E).$ Given $\lambda \in
[0,1],$ we have $\lambda s\le^+ \lambda s_1$ and $(1-\lambda)s \le^+
(1-\lambda)s_2$ so that $s= \lambda s +(1-\lambda)s \le ^+ \lambda
s_1 + (1-\lambda)s_2.$ Hence $s\le^+s_0:=\bigwedge \{s_\lambda:
\lambda \in [0,1]\}.$ If we set $\lambda = 1$ or $\lambda =0,$ we
see that $s_1,s_2 \in \{s_\lambda: \lambda \in [0,1]\}.$ Therefore,
$s_0 \le^+ s.$
\end{proof}

A signed measure $m$ on a pseudo effect algebra $E$ is $\sigma$-{\it
additive} if, $\{a_n\} \nearrow a,$ i.e. $a_n\le a_{n+1}$ for each
$n\ge 1$ and $\bigvee_n a_n = a,$ then $m(a)=\lim_n m(a_n).$ A
measure $m$ is $\sigma$-additive iff $a_n\searrow 0$ entails
$m(a_n)\searrow 0.$

\begin{proposition}\label{pr:3.8} If $m_1$ and $m_2$ are
$\sigma$-additive measures on a pseudo effect algebra with {\rm
(RDP)}, so are $m_1\vee m_2$ and $m_1\wedge m_2.$
\end{proposition}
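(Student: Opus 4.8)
The plan is to prove that $m_1 \vee m_2$ is $\sigma$-additive first, and then derive the $\sigma$-additivity of $m_1 \wedge m_2$ from the $\ell$-group identity $(m_1 \vee m_2) + (m_1 \wedge m_2) = m_1 + m_2$, since the right-hand side is $\sigma$-additive and a pointwise difference of a $\sigma$-additive measure and a $\sigma$-additive signed measure is again $\sigma$-additive (using that $m_1 \wedge m_2$ is already known to be a positive measure by Theorem \ref{th:3.4}). So the real work is the join. By the remark preceding the proposition, it suffices to show: if $a_n \searrow 0$ in $E$, then $(m_1 \vee m_2)(a_n) \searrow 0$.

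First I would fix $\epsilon > 0$ and use the formula from Theorem \ref{th:3.5}, namely $(m_1 \vee m_2)(a) = \sup\{m_1(x_1) + m_2(x_2) : a = x_1 + x_2,\ x_1, x_2 \in E\}$, to pick, for a given index $N$, a decomposition $a_N = x_1 + x_2$ with $m_1(x_1) + m_2(x_2) > (m_1 \vee m_2)(a_N) - \epsilon$. The idea is then to "trace" this decomposition down the sequence $a_n$ for $n \ge N$: since $a_n \le a_N$, write $a_N = a_n + b_n$ with $b_n \searrow$ (more precisely $a_N \minusli a_n$ is increasing to $a_N$, as $\bigvee a_n = \ldots$ fails — here we need $a_n \searrow 0$, so $b_n = a_N \minusli a_n \nearrow a_N$), and apply (RDP) to the equation $x_1 + x_2 = a_n + b_n$ to get a refinement $x_1 = y_1 + z_1$, $x_2 = y_2 + z_2$, $a_n = y_1 + y_2$ (up to conjugates, which do not affect measure values since $\mathbb R$ is Abelian, exactly as exploited in the proof of Theorem \ref{th:3.5}), $b_n = z_1 + z_2$. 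Then $(m_1 \vee m_2)(a_n) \ge m_1(y_1) + m_2(y_2) = m_1(x_1) + m_2(x_2) - m_1(z_1) - m_2(z_2)$.

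The remaining point is to control $m_1(z_1) + m_2(z_2)$: we have $z_i \le b_n$ and $b_n \nearrow a_N$, equivalently $a_N \minusre(\text{something})$... — concretely, $z_1, z_2 \le b_n$ and, since $m_1, m_2$ are positive and monotone, $0 \le m_1(z_1) + m_2(z_2) \le m_1(b_n) + m_2(b_n) = (m_1 + m_2)(b_n)$. Now $(m_1 + m_2)$ is a $\sigma$-additive measure and $b_n = a_N \minusli a_n$; since $a_n \searrow 0$, standard manipulation with $\minusli$ gives $(m_1+m_2)(b_n) = (m_1+m_2)(a_N) - (m_1+m_2)(a_n) \to (m_1+m_2)(a_N)$, which does not go to $0$ — so this crude bound is not enough. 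The fix is to choose $N$ large first: since $a_n \searrow 0$ and $(m_1 \vee m_2)$ is a measure dominated by $m_1 + m_2$ (indeed $m_1 \vee m_2 \le^+ m_1 + m_2$ as both $m_i \ge^+ 0$), $\sigma$-additivity of $m_1 + m_2$ forces $(m_1 \vee m_2)(a_n) \to \ell$ for some $\ell \ge 0$ (the sequence is decreasing and bounded below), and we must show $\ell = 0$. Pick $N$ with $(m_1+m_2)(a_N) < \ell + \epsilon$; then for all $n \ge N$, $m_1(z_1) + m_2(z_2) \le (m_1+m_2)(b_n) \le (m_1+m_2)(a_N) < \ell + \epsilon$ — still circular.

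Let me restructure: the clean route is to bound $(m_1 \vee m_2)(a_n)$ from above directly. For any decomposition $a_n = x_1 + x_2$ we have, using $a_n \le a_N$ so $a_N = a_n + b_n$, $(m_1 \vee m_2)(a_N) \ge m_1(x_1) + m_2(x_2) + (\text{a suitable piece of } b_n \text{ on the } m_1\text{-side or }m_2\text{-side}) \ge m_1(x_1) + m_2(x_2)$; taking sup over decompositions of $a_n$, $(m_1 \vee m_2)(a_n) \le (m_1 \vee m_2)(a_N)$, which is just monotonicity and says nothing new. The genuine argument must instead be: given $\epsilon$, choose $N$ so that $(m_1 + m_2)(a_N) < \epsilon$ — possible since $a_n \searrow 0$ and $m_1 + m_2$ is $\sigma$-additive. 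Then for $n \ge N$, since $0 \le^+ m_1 \vee m_2 \le^+ m_1 + m_2$ and $a_n \le a_N$, monotonicity of the positive measure $m_1 \vee m_2$ gives $0 \le (m_1 \vee m_2)(a_n) \le (m_1 \vee m_2)(a_N) \le (m_1+m_2)(a_N) < \epsilon$. Hence $(m_1 \vee m_2)(a_n) \searrow 0$, so $m_1 \vee m_2$ is $\sigma$-additive, and then $m_1 \wedge m_2 = (m_1 + m_2) - (m_1 \vee m_2)$ is $\sigma$-additive as the pointwise difference of $\sigma$-additive maps. The main obstacle, now resolved, was realizing that no delicate refinement argument via (RDP) is needed at all: the domination $0 \le^+ m_1 \vee m_2 \le^+ m_1 + m_2$ together with monotonicity reduces everything to the $\sigma$-additivity of the sum $m_1 + m_2$, which is immediate.

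\begin{proof}
Since $m_1, m_2$ are positive measures, $0 \le^+ m_1 \vee m_2 \le^+ m_1 + m_2$, and by Theorem \ref{th:3.4} both $m_1 \vee m_2$ and $m_1 \wedge m_2$ are (positive) measures on $E$. It suffices to prove that $m_1 \vee m_2$ is $\sigma$-additive. Let $a_n \searrow 0$ in $E$ and let $\epsilon > 0$. Since $m_1 + m_2$ is a $\sigma$-additive measure, there is $N$ with $(m_1+m_2)(a_N) < \epsilon$. For every $n \ge N$ we have $a_n \le a_N$, and since $m_1 \vee m_2$ is a positive, hence monotone, measure with $m_1 \vee m_2 \le^+ m_1 + m_2$,
$$
0 \le (m_1 \vee m_2)(a_n) \le (m_1 \vee m_2)(a_N) \le (m_1+m_2)(a_N) < \epsilon.
$$
Thus $(m_1 \vee m_2)(a_n) \searrow 0$, so $m_1 \vee m_2$ is $\sigma$-additive. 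Finally, by the $\ell$-group identity in $\mathcal J(E)$ we have $(m_1 \vee m_2)(x) + (m_1 \wedge m_2)(x) = m_1(x) + m_2(x)$ for all $x \in E$, whence $(m_1 \wedge m_2)(x) = m_1(x) + m_2(x) - (m_1 \vee m_2)(x)$. If $a_n \nearrow a$, then $m_1(a_n) + m_2(a_n) - (m_1 \vee m_2)(a_n) \to m_1(a) + m_2(a) - (m_1 \vee m_2)(a)$, so $(m_1 \wedge m_2)(a_n) \to (m_1 \wedge m_2)(a)$, i.e. $m_1 \wedge m_2$ is $\sigma$-additive as well.
\end{proof}
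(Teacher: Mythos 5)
Your proof is correct and rests on the same idea as the paper's: the domination $0\le^+ m_1\vee m_2\le^+ m_1+m_2$ (resp.\ $0\le^+ m_1\wedge m_2\le^+ m_1$) together with monotonicity squeezes $(m_1\vee m_2)(a_n)$ and $(m_1\wedge m_2)(a_n)$ to $0$ when $a_n\searrow 0$, which is exactly how the paper argues, only stated there directly as $0\le(m_1\vee m_2)(a_n)\le m_1(a_n)+m_2(a_n)$ without the $\epsilon$--$N$ detour or the modular identity for the meet. The long exploratory preamble (the (RDP) refinement attempt) is unnecessary, as your final clean argument shows.
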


\begin{proof} Let $a_n\searrow 0.$ Due to Theorem \ref{th:3.5},
$m_1(a_n)+ m_2(a_n) \ge (m_1\vee m_2)(a_n) \ge 0$ so that $(m_1\vee
m_2)(a_n) \searrow 0.$ Similarly, $m_1(a_n) \ge (m_1 \wedge
m_2)(a_n) \ge 0$ and $(m_1\wedge m_2)(a_n)\searrow 0.$
\end{proof}

Theorem \ref{th:3.5} allows us to define, for any Jordan signed
measure $m,$ its positive and negative parts, $m^+$ and $m^-,$ via

$$ m^+:= m\vee 0 \quad \mbox{and}\quad  m^-:= -(m\wedge 0).$$
Then $m = m^+ - m^-,$ $(-m)^+= m^-,$ and $(-m)^-= m^+.$

Theorem \ref{th:3.5} says that
$$ m^+(a) = \sup\{m(x): x\le a\} \quad \mbox{and} \quad m^-(a)=\inf\{m(x): x
\le a\}\eqno(4.1)
$$
for each $a \in E.$

The decomposition $m=m^+-m^-$ is said to be  {\it Jordan}, and if
$m= m_1-m_2$ for some positive measures $m_1,m_2 $ on $E$, then
$m^+\le^+ m_1$ and $m^-\le^+ m_2.$ Moreover, we define an {\it
absolute value}, $|m|,$ of $m$ via
$$ |m|=m^++m^-.$$

Therefore, if $\mathcal S(E)\ne \emptyset,$ every  Jordan signed
measure $m$ can be uniquely expressed in the form
$$ m = \alpha_1 s_1 -\alpha_2 s_2, \eqno(4.2)
$$
where $\alpha_1,\alpha_2$ are real numbers and $s_1,s_1$ are states
such that $\alpha_1 s_1=m^+$ and $\alpha_2 s_2=m^-,$ we call it a
{\it canonical Jordan decomposition} of $m.$

The measures $m^+,m^-$ and $|m|$ are sometimes called also an {\it
upper} or  {\it positive variation}, a {\it lower} or {\it negative
variation} and a {\it total variation} of $m,$ respectively.

\begin{proposition}\label{pr:3.9}  For any Jordan signed measure $m$
on a pseudo effect algebra $E$ with {\rm (RDP)}, we define a mapping
$v_m: E \to \mathbb R$ by
$$v_m(x):= \sup\{|m(x_1)|+\cdots + |m(x_n)|: x= x_1+\cdots+x_n, n\ge
1 \}.\eqno(4.3)
$$

Then $v_m = |m|.$

\end{proposition}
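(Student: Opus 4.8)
The plan is to prove the two inequalities $v_m(x)\le |m|(x)$ and $|m|(x)\le v_m(x)$ separately for each $x\in E$. For the first, I would start from an arbitrary decomposition $x=x_1+\cdots+x_n$ with all $x_i\in E$ and estimate each term by $|m(x_i)| = |m^+(x_i)-m^-(x_i)|\le m^+(x_i)+m^-(x_i) = |m|(x_i)$, using that $m=m^+-m^-$ and that $m^+,m^-$ are positive measures (from the Jordan decomposition after Theorem \ref{th:3.5}). Summing over $i$ and using additivity of $|m|$ on the decomposition gives $\sum_i |m(x_i)| \le \sum_i |m|(x_i) = |m|(x_1+\cdots+x_n) = |m|(x)$. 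Taking the supremum over all decompositions yields $v_m(x)\le |m|(x)$; in particular $v_m$ is real-valued, so it is well defined.

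For the reverse inequality I would exploit the explicit formulas (4.1), namely $m^+(x)=\sup\{m(y):y\le x\}$ and $-m^-(x)=\inf\{m(y):y\le x\}$. Fix $\epsilon>0$. Choose $y\le x$ with $m(y) > m^+(x)-\epsilon/2$ and choose $z\le x$ with $m(z) < -m^-(x)+\epsilon/2$. The idea is to refine $y$ and $z$ into a common decomposition of $x$. Since $y\le x$ we may write $x = y + y'$ for a suitable $y'\in E$; similarly I want to split off the "$z$ part". The cleanest route is to apply (RDP), or rather its consequence Theorem \ref{th:3.5}: actually the most direct tool is to observe that $m^+(x) = (m\vee 0)(x)$ and $m^-(x)=(0\vee(-m))(x)$ are computed by Theorem \ref{th:3.5} as $m^+(x) = \sup\{m(x_1)+0\cdot m(x_2): x=x_1+x_2\} = \sup\{m(x_1):x_1\le x\}$, which is just (4.1) again. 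So instead I would pick a decomposition $x = a + b$ with $m(a) > m^+(x)-\epsilon/2$ and $m(a)=\sup$ nearly attained, and then within $b$ (or by a joint refinement via (RDP) of the two near-optimal decompositions for $m^+$ and $m^-$) extract a piece $c\le b$ on which $m(c) < -m^-(x)+\epsilon/2$. Concretely: let $x=a+b$ with $m(a)>m^+(x)-\epsilon/2$; since $m^-$ is a measure, $m^-(b) = m^-(x)-m^-(a)$, and because $m^-(x)\ge m^-(a)$ is not automatic I would instead choose $a$ so that additionally $m^-(a)$ is small — this is where care is needed. The safe argument: apply Theorem \ref{th:3.5} to $(m\vee 0) + (0\vee(-m))$ evaluated at $x$, i.e. to $|m| = m^+ + m^-$; but $|m| = m^+\vee m^- + m^+\wedge m^-$ is not directly a sup over decompositions. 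The honest approach is: pick the near-optimal $a$ for $m^+$ and near-optimal $c$ for $-m^-$, both $\le x$; use (RDP) on $x = a + b = c + d$ to get a joint refinement $a = p+q$, $c = p+r$ with the pieces disjointly arranged, producing a decomposition of $x$ whose associated sum $\sum|m(\cdot)|$ exceeds $m(a) - m(c) > m^+(x)+m^-(x) - \epsilon = |m|(x)-\epsilon$.

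The main obstacle is making this joint-refinement step rigorous in the noncommutative setting: after applying (RDP) one must track conjugates of the pieces, exactly as in the proof of Theorem \ref{th:3.5}, and use that any $h\in\mathcal J(E)$ (in particular $m$ itself and $m^+,m^-$) takes equal values on conjugate elements because $\mathbb R$ is Abelian. Once the decomposition $x = x_1+\cdots+x_k$ with $\sum_j |m(x_j)| > |m|(x)-\epsilon$ is produced, letting $\epsilon\to 0$ gives $v_m(x)\ge |m|(x)$, and combined with the first inequality we conclude $v_m=|m|$.
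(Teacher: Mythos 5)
Your argument is correct, but it takes a genuinely different route from the paper. The paper proves the hard inequality indirectly: it shows $v_m$ is subadditive, feeds it into Proposition \ref{pr:3.1} to obtain a positive measure $V$ with $v_m\le V\le |m|$ pointwise, shows $v_m=V$ by a two-level refinement of decompositions (so $v_m$ is itself additive), and then concludes via the $\ell$-group identity $|m|=m^+ + m^- = m^+\vee m^-$ (valid since $m^+\wedge m^-=0$): the measure $v_m$ dominates $m^+$ and $m^-$, hence dominates their join $|m|$. You instead give a direct $\epsilon$-argument: choose $a\le x$ with $m(a)>m^+(x)-\epsilon/2$ and $c\le x$ with $m(c)<-m^-(x)+\epsilon/2$ (you correctly read (4.1) as $-m^-(x)=\inf\{m(y):y\le x\}$, fixing the paper's sign slip), apply (RDP) once to $x=a+b=c+d$ to get $d_1,d_2,d_3,d_4$ with $a=d_1+d_2$, $c=d_1+d_3$, and use that the common piece cancels: $\sum_i|m(d_i)|\ge |m(d_2)|+|m(d_3)|\ge m(d_2)-m(d_3)=m(a)-m(c)>|m|(x)-\epsilon$. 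This is sound, and in fact your worry about conjugates is unnecessary at this point: axiom (RDP) applied to $a+b=c+d$ already yields the refinement in the correct order, so $x=d_1+d_2+d_3+d_4$ is a genuine decomposition in $E$ and no conjugation bookkeeping (as in the proof of Theorem \ref{th:3.5}) is needed. What each approach buys: the paper's route establishes along the way that $v_m$ is additive and exploits the lattice structure of $\mathcal J(E)$, which fits the surrounding development; your route is more elementary and self-contained, using only formula (4.1), one application of (RDP), and the triangle inequality, and it exhibits explicit near-optimal four-piece decompositions witnessing $v_m(x)\ge|m|(x)$. To turn the sketch into a finished proof you should simply delete the exploratory detours in the middle paragraph and write out the cancellation computation $m(a)-m(c)=m(d_2)-m(d_3)$ explicitly.
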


\begin{proof}  Let $x = x_1+\cdots+x_n.$  Then $|m(x_1)|+\cdots +
|m(x_n)|\le |m|(x_1)+\cdots + |m|(x_n) = |m|(x),$ so that $v_m(x)
\le |m|(x).$  Due to (4.1), $m^+(x),m^-(x)\le v_m(x)$ for each $x\in
E.$  We assert that $v_m$ is subadditive, i.e., $v_m(x+y) \le
v_m(x)+v_m(y)$ whenever $x+y \in E.$ Indeed, if $x +y = z_1+\cdots
+z_n$, (RDP) entails that there are $x_1,\ldots,z_n,y_1,\ldots, y_n
\in E$ such that $x=x_1+\cdots+z_n$ and $y=z_1+\cdots+ z_n.$  Then
$|m(z_1)|+\cdots+|m(z_n)| \le \sum_i |m(x_i)| + \sum_i |m(y_i)| \le
v_m(x)+v_m(y),$ so that $v_m(x+y)\le v_m(x)+v_m(y).$

According to (3.1), we define the set
$$V_m(x)=\{v_m(x_1)+\cdots + v_m(x_n): x = x_1+\cdots + x_n\}.
$$
This set is bounded in $\mathbb R$, its upper bound is $|m(x)|.$
Proposition \ref{pr:3.1} yields that the functional $V(x)=\sup
V_m(x),$ $x \in E,$ is a positive measure on $E.$ It is clear that
$v_m(x)\le V(x)\le |m|(x)$ for each $x \in E.$

We show that $v_m = V.$  Given $\epsilon >0,$ there is a
decomposition $x = x_1+\cdots+x_n$ such that $\sum_i v_m(x_i)>
V(x)-\epsilon.$  For any $i=1,\ldots,n$, there is a finite
decomposition of each $x_i = \sum_j x_i^j$ such that
$\sum_j|m(x_i^j)| \ge v_m(x_i)- \epsilon/n.$ Therefore,
$$ \sum_{i=1}^n \sum_j |m(x_i^j)| > \sum_{i=1}^n (v_m(x_i)- \epsilon /n) = \sum_{i=1}^n
v_m(x_i) - \epsilon > V(x)-2\epsilon.
$$
This entails, $v_m(x) \ge V(x)-2\epsilon.$  Since $\epsilon$ was
arbitrary, $v_m(x)\ge V(x),$ consequently, $v_m(x)=V(x)$ for any
$x\in E.$

Since $m^+\wedge m^-=0,$ the $\ell$-group properties imply $|m| =
m^+ + m^- = m^+ \vee m^-.$  Since $m^+, m^+\le^+ v_m=V \le^+ |m|,$
we have $v_m = |m|.$
\end{proof}

Let $\{a_n\}$ be a sequence of elements of a pseudo effect algebra
$E$ such that $b_n=a_1+\cdots+a_n$ exists for each $n\ge 1.$  If $a
= \bigvee_n b_n$ is defined in $E,$ we write $a := a_1+a_2 +\cdots
=\sum_n a_n.$ Let $\{a_n\}\nearrow a.$ If we set $a_1'=a_1$ and
$a_n' = a_{n-1}\minusre a_n$ for each $n\ge 2,$ then $a_1'+\cdots +
a_n'=a_n$ for each $n\ge 1$ and $a=\sum_n a_n'.$  Hence, a signed
measure $m$ is $\sigma$-additive iff $a=\sum_n a_n$ entails
$m(a)=\sum_n m(a_n).$

A pseudo effect algebra is said to be  {\it monotone
$\sigma$-complete} if $a_1\le a_2 \le a_n \le \cdots,$ then $a
=\bigvee_n a_n$ is defined in $E.$ We say that $E$ satisfies
$\sigma$-(RDP) if $a_1+a_2 = b_1+b_2 + \cdots ,$ then there are two
sequences $\{c_{1n}\}_n$ and $\{c_{2n}\}$ such that $a_i = \sum_n
c_{in}$ for $i=1,2$ and $b_n= c_{1n}+c_{2n}$ for each $n\ge 1.$

\begin{proposition}\label{pr:3.10}  Let $E$ be a pseudo effect
algebra with $\sigma$-{\rm (RDP)}. If $m$ is a $\sigma$-additive
Jordan signed measure, so is $m^+, m^-$ and $|m|.$
\end{proposition}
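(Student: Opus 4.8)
The plan is to reduce the $\sigma$-additivity of $m^+$ (and then $m^-$, $|m|$) to the known formula for $m^+$ from (4.1) together with the $\sigma$-(RDP) refinement hypothesis. Recall that a positive measure $\mu$ is $\sigma$-additive precisely when $a = \sum_n a_n$ (with $\{b_n = a_1 + \cdots + a_n\} \nearrow a$) forces $\mu(a) = \sum_n \mu(a_n)$; equivalently, writing $a = b_n + c_n$ where $c_n \searrow 0$, we need $\mu(c_n) \searrow 0$. So first I would fix an increasing sequence $\{b_n\} \nearrow a$ in $E$, set $b_n + c_n = a$, and aim to show $m^+(c_n) \to 0$.

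The key step is the following. Since $m^+(a) = \sup\{m(x) : x \le a\}$, for a given $\epsilon > 0$ choose $x \le a$ with $m(x) > m^+(a) - \epsilon$. Now I would use $\sigma$-(RDP) to refine the decomposition $a = x + (a \minusli x) = b_1 + b_2 + \cdots$ (where I re-index the summands $a_1', a_2', \ldots$ of $a$ so that $a = \sum_k a_k'$ and $b_n = a_1' + \cdots + a_n'$): this yields sequences $\{c_{1k}\}$, $\{c_{2k}\}$ with $x = \sum_k c_{1k}$, $a \minusli x = \sum_k c_{2k}$, and $a_k' = c_{1k} + c_{2k}$ for each $k$. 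Put $x_n := c_{11} + \cdots + c_{1n} \le b_n$ and let $x_n + y_n = x$ (so $y_n = \sum_{k > n} c_{1k}$, i.e. $\{x_n\} \nearrow x$). Because $m$ is $\sigma$-additive, $m(x) = \sum_k m(c_{1k})$, so $m(y_n) = m(x) - m(x_n) \to 0$; choose $N$ with $|m(y_n)| < \epsilon$ for $n \ge N$. Then for $n \ge N$, since $x_n \le b_n$ we have $b_n \minusli x_n \ge 0$ with $x_n + (b_n \minusli x_n) = b_n$, and moreover $x \minusli x_n = y_n$ combined with $b_n \le a$ gives $x \le b_n$-decomposition pieces; tracking this, $m^+(b_n) \ge m(x_n) = m(x) - m(y_n) > m^+(a) - 2\epsilon$. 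Since $m^+(b_n) \le m^+(a)$ always and $m^+(b_n) + m^+(c_n) = m^+(a)$, this gives $m^+(c_n) < 2\epsilon$ for all $n \ge N$, i.e. $m^+(c_n) \searrow 0$, which is exactly $\sigma$-additivity of $m^+$.

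Once $m^+$ is handled, $m^- = (-m)^+$ is $\sigma$-additive by the same argument applied to the $\sigma$-additive signed measure $-m$, and then $|m| = m^+ + m^-$ is a sum of two $\sigma$-additive measures, hence $\sigma$-additive (if $a_n \searrow 0$ then $m^+(a_n) \searrow 0$ and $m^-(a_n) \searrow 0$, so their sum does too). The main obstacle I anticipate is purely bookkeeping: the summands produced by $\sigma$-(RDP) need not literally satisfy $c_{1k} \le a_k'$ as written conjugates, so one must pass to conjugate elements (as was done repeatedly in the proofs of Theorem~\ref{th:3.5} and Proposition~\ref{pr:ex3.8}) to realize $x_n \le b_n$ honestly inside $E$; since $m$ (and $m^+$, being built from $m$ via sups over sub-elements) is invariant under conjugation because $\mathbb{R}$ is Abelian, these adjustments do not affect any of the measure values, and the estimate $m^+(c_n) < 2\epsilon$ goes through unchanged.
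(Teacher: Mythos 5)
Your proof is correct, but it takes a different route from the paper's. You establish $\sigma$-additivity of $m^+$ first, directly from the formula $m^+(a)=\sup\{m(x):x\le a\}$ of (4.1): you pick $x\le a$ nearly attaining $m^+(a)$, refine the two-term decomposition $x+(x\minusre a)=a=\sum_n a_n'$ by the (two-term) $\sigma$-(RDP), use $\sigma$-additivity of $m$ along the partial sums of $\{c_{1k}\}$, and push those partial sums inside $b_n$ by conjugation (legitimate, since conjugates have equal values under any signed measure, as in the proof of Theorem \ref{th:3.5}); then $m^-=(-m)^+$ and $|m|=m^++m^-$ follow at once. The paper instead proves $\sigma$-additivity of the total variation $|m|=v_m$ first, using the variation formula (4.3) of Proposition \ref{pr:3.9} and a $k$-term countable refinement: for $a=x_1+\cdots+x_k$ it bounds $\sum_j|m(x_j)|$ by $\sum_n v_m(a_n')$, which needs no reordering of noncommuting summands, and then recovers $m^+$ and $m^-$ from the linear identity relating $m$, $m^+$, $m^-$ and $|m|$ (the paper's ``$m+2m^+=|m|$'' should read $m+2m^-=|m|$, equivalently $m^+=\tfrac12(|m|+m)$). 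The trade-off: your argument uses only the two-term $\sigma$-(RDP), i.e.\ literally the definition, but pays for it with the conjugation bookkeeping you correctly flag; the paper's argument implicitly invokes a $k$-term version of $\sigma$-(RDP) but avoids any rearrangement inside the noncommutative sums. Both are sound, and your reduction $m^-=(-m)^+$, $|m|=m^++m^-$ at the end is cleaner than re-running the estimate.
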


\begin{proof}  Assume $\{a_n\}\nearrow a.$ If we set $a_1'=a_1$ and
$a_n' = a_{n-1}\minusre a_n$ for each $n\ge 2,$ then $a_1'+\cdots +
a_n'=a_n$ for each $n\ge 1$ and $a=\sum_n a_n'.$ Then $m(a) = \sum_n
m(a_n').$

We show that $|m|=v_m$ is $\sigma$-additive. We have $v_m(a) \ge
v_m(a_n)$ so that $v_m(a) \ge \lim v_m(a_n).$  Now assume $a=
x_1+\cdots+x_k.$  The $\sigma$-(RDP) entails that there is $k$ many
sequences $\{c_{jn}\}_n$ for $j=1,\ldots, k$ such that $x_j = \sum_n
c_{jn}$ and $a_n'= \sum_{j=1}^k c_{jn}$ for each $n\ge 1.$ Check

\begin{eqnarray*}
\sum_{j=1}^k |m(x_j)|&=& \sum_{j=1}^k |m(\sum_n c_{jn})|=
\sum_{j=1}^k|\sum_n m(c_{jn})|\\
&\le& \sum_n \sum_{j=1}^k |m(c_{jn})| \le \sum_n v_m(a_n),
\end{eqnarray*}
so that $v_m(a) \le \sum_n \le \lim_n v_m(a_n)$ and $v_m=|m|$ is
$\sigma$-additive.  Because $m +2m^+ = |m|,$ we see that $m^+$ is
$\sigma$-additive, consequently, so is $m^-.$
\end{proof}

Suppose that $E$ admits at least one state.  Given a positive
measure $m$ on $E$ with $m(1)>0,$ let $\mathcal J(m)=[0,m]:=\{t \in
\mathcal J(E): 0\le^+ t \le ^+m\}$ be an interval in $\mathcal
J(E).$  We can define on it an MV-structure by $s\oplus t:=
(s+t)\wedge m,$ $s\odot t:= \{s+t-m\}\vee 0,$ and $s^*= m- s$ for
all $s,t \in \mathcal J(m).$ Then $(\mathcal J(m); \oplus,^*, 0)$ is
an MV-algebra, where $m=0^*$ is the unit element of $\mathcal J(m).$

The partial operation, $+$, on $\mathcal J(m),$ is defined as
follows: $s+t$ is defined in $\mathcal J(m)$ iff $s+t \le^+ m,$ or
equivalently, it coincides with the restriction of the standard
addition of the functions $s$ and $t$ belongs to $\mathcal J(m).$

It is clear that the state space of $\mathcal J(m)$ is non-void. Let
$a\in E$ be a fixed element. The mapping $\mu_a: \mathcal J(m)\to
[0,1]$ defined by $\mu_a(s):= s(a),$ $s \in \mathcal J(m),$ is a
state on $\mathcal J(E).$

Moreover, the system of states $\{\mu_a: a\in E\}$ is {\it
order-determining}, i.e. $\mu_a(s)\le \mu_a(t)$ for all $a\in E,$
implies $s \le^+ t.$

\section{Simplex Structure of Pseudo Effect Algebras and Integrals}

This is the main section of the paper. We show that if a pseudo
effect algebra satisfies (RDP), then its state space is either empty
or a non-empty simplex. This will allow represent states by standard
integrals.

The following notions on convex sets can be found e.g. in
\cite{Goo}. Let $K_1, K_2$ be two convex sets. A mapping $f:K_1\to
K_2$ is said to be {\it affine} if it preserves all convex
combinations, and if $f$ is also injective and surjective such that
also $f^{-1}$ is affine, $f$ is said to be an {\it affine
isomorphism} and $K_1$ and $K_2$ are {\it affinely isomorphic}.

We recall that a {\it convex cone} in a real linear space $V$ is any
subset $C$ of  $V$ such that (i) $0\in C,$ (ii) if $x_1,x_2 \in C,$
then $\alpha_1x_1 +\alpha_2 x_2 \in C$ for any $\alpha_1,\alpha_2
\in \mathbb R^+.$  A {\it strict cone} is any convex cone $C$ such
that $C\cap -C =\{0\},$ where $-C=\{-x:\ x \in C\}.$ A {\it base}
for a convex cone $C$ is any convex subset $K$ of $C$ such that
every non-zero element $y \in C$ may be uniquely expressed in the
form $y = \alpha x$ for some $\alpha \in \mathbb R^+$ and some $x
\in K.$

We recall that in view of \cite[Prop 10.2]{Goo}, if $K$ is a
non-void convex subset of $V,$ and if we set

$$ C =\{\alpha x:\ \alpha \in \mathbb R^+,\ x \in K\},
$$
then $C$ is a convex cone in $V,$ and $K$ is a base for $C$ iff
there is a linear functional $f$ on $V$ such that $f(K) = 1$ iff $K$
is contained in a hyperplane in $V$ which misses the origin.

Any strict cone $C$ of $V$ defines a partial order $\le_C$ via $x
\le_C y$ iff $y-x \in C.$ It is clear that $C=\{x \in V:\ 0 \le_C
x\}.$ A {\it lattice cone} is any strict convex cone $C$ in $V$ such
that $C$ is a lattice under $\le_C.$

A {\it simplex} in a linear space $V$ is any convex subset $K$ of
$V$ that is affinely isomorphic to a base for a lattice cone in some
real linear space. A  simplex $K$ in a locally convex Hausdorff
space is said to be (i) {\it Choquet} if $K$ is compact, and (ii)
{\it Bauer} if $K$ and $\partial_e K$ are compact, where $\partial_e
K$ is the set of extreme points of $K.$

A simplex is a generalization of a classical simplex in $\mathbb
R^n,$ and we recall that no disc or no convex quadrilateral in the
plane are not simplices.

\begin{theorem}\label{th:4.2}  If $E$ is a pseudo effect algebra with
{\rm (RDP)}, then either $\mathcal S(E)$ is empty or it is a
nonempty Choquet simplex.
\end{theorem}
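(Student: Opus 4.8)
The plan is to show that $\mathcal{S}(E)$, when nonempty, is a base for the lattice cone $\mathcal{M}(E)^+ \subseteq \mathcal{J}(E)$, and then invoke the general fact that a compact base for a lattice cone is a Choquet simplex. First I would dispose of the empty case, which is permitted by the statement. So assume $\mathcal{S}(E) \neq \emptyset$; equivalently, by the remarks in Section 4, $\mathcal{J}(E) \supsetneq \{0\}$ is a nontrivial Abelian Dedekind complete $\ell$-group, hence in particular a Riesz space (Theorem \ref{th:3.5}).

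Next I would identify the relevant cone. Take $V = \mathcal{J}(E)$ viewed as a real vector space (it is a lattice ordered real vector space by Theorem \ref{th:3.5}), and let $C = \mathcal{M}(E)^+ \cap \mathcal{J}(E)$ be the cone of Jordan \emph{positive} measures, i.e. the positive cone of the $\ell$-group $\mathcal{J}(E)$. Then $C$ is a strict convex cone (if $m$ and $-m$ are both positive measures then $m=0$), and it is a lattice cone precisely because $\mathcal{J}(E)$ is an $\ell$-group under $\le^+$ with positive cone $C$. Now consider the linear functional $f$ on $\mathcal{J}(E)$ given by evaluation at the unit: $f(m) := m(1)$. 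This is linear, and $\mathcal{S}(E) = \{m \in C : f(m) = 1\}$. By \cite[Prop 10.2]{Goo} as quoted in the excerpt, since $\mathcal{S}(E)$ is a nonempty convex subset lying on the hyperplane $\{f = 1\}$ which misses the origin, $\mathcal{S}(E)$ is a base for the cone $C' := \{\alpha s : \alpha \in \mathbb{R}^+, s \in \mathcal{S}(E)\}$; moreover every nonzero element of $C$ is of the form $\alpha s$ with $\alpha = m(1) > 0$ and $s = m/m(1) \in \mathcal{S}(E)$, so in fact $C' = C$. Thus $\mathcal{S}(E)$ is a base for the lattice cone $C$, and therefore $\mathcal{S}(E)$ is a simplex in the sense of the definition given above (it is affinely isomorphic — via the identity — to a base for a lattice cone in a real linear space).

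It remains to check that this simplex is Choquet, i.e. compact. But $\mathcal{S}(E)$ carries the weak topology of signed measures (pointwise convergence on $E$), and the excerpt already records that $\mathcal{S}(E)$ is a compact Hausdorff space in this topology; moreover the weak topology is the restriction of a locally convex Hausdorff vector topology on $\mathcal{M}(E) \subseteq \mathbb{R}^E$, namely the product topology, and the affine structure of $\mathcal{S}(E)$ as a convex subset is exactly the one used to define ``simplex''. Hence $\mathcal{S}(E)$ is a compact simplex, i.e. a Choquet simplex, which completes the proof.

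The main obstacle — and the reason the preceding two sections of the paper were needed — is establishing that $C = \mathcal{M}(E)^+ \cap \mathcal{J}(E)$ really is a \emph{lattice} cone, i.e. that the Jordan signed measures form an $\ell$-group; this is exactly Theorem \ref{th:3.4}(a)/Theorem \ref{th:3.5}, and without (RDP) it can fail. A secondary point requiring care is that one must work inside $\mathcal{J}(E)$ rather than all of $\mathcal{M}(E)$: lattice operations need not behave well on arbitrary signed measures, but on relatively bounded (= Jordan) ones they do by Proposition \ref{pr:3.2}. Since every state is automatically a Jordan measure (it is positive, hence relatively bounded), $\mathcal{S}(E) \subseteq \mathcal{J}(E)$ and no information is lost by restricting. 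Everything else is the routine translation through \cite[Prop 10.2]{Goo} and the already-noted compactness of the state space.
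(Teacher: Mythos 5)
Your proposal is correct and follows essentially the same route as the paper: view $\mathcal S(E)$ as a base (via the hyperplane $m(1)=1$ and \cite[Prop 10.2]{Goo}) for the positive cone $\mathcal J(E)^+=\mathcal M(E)^+$ of the Riesz space $\mathcal J(E)$, whose lattice structure comes from Theorems \ref{th:3.4}/\ref{th:3.5}, and then use compactness of $\mathcal S(E)$ in the weak topology to conclude it is a Choquet simplex. Your additional checks (strictness of the cone, that every nonzero positive measure normalizes to a state, and that states are automatically Jordan) only make explicit what the paper's proof leaves implicit.
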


\begin{proof} Assume that $\mathcal S(E)$ is nonempty. Then the
positive cone  $\mathcal M(E)^+= \mathcal J(E)^+$ of the Abelian
Dedekind complete $\ell$-group $\mathcal J(E)$ consists of all
positive measures on $E,$ so that $\mathcal J(E)^+ =\{\alpha s:
\alpha \in \mathbb R^+, s \in \mathcal S(E)\}.$ Since  $\mathcal
S(E)$ lies in the hyperplane $\{m \in \mathcal J(E): m(u)= 1\}$
which misses the origin,   $\mathcal S(E)$ is a base for $\mathcal
J(E)^+,$ and $\mathcal S(E)$ is a simplex.  On the other hand,
$\mathcal S(E)$ is compact, so that $\mathcal S(E)$ is a Choquet
simplex.
\end{proof}

We note that if $E$  is an effect algebra with (RDP) and $0\ne 1,$
then $E$ admits at least one state because then $E=\Gamma(G,u)$ for
some  unital Abelian interpolation po-group $(G,u)$; now it is
enough to apply \cite[Cor 4.4]{Goo}. Hence, its state space is
always a non-empty Choquet simplex. If an effect algebra $E$ does
not satisfy (RDP), then its state space is not necessarily a
simplex; for instance, this is the case  for $E=\mathcal E(H),$
$\dim H>2.$ On the other hand, the state space of a commutative
C$^*$-algebra or the trace space of a general C$^*$ are simplices,
\cite[Thm 4.4, p. 7]{AlSc}  or \cite[Ex 4.2.6]{BrRo}.

On the other hand, it is important to recall that according to a
delicate result of Choquet \cite[Thm I.5.13]{Alf}, for any pseudo
effect algebra $E,$ $\partial_e {\mathcal S}(E)$ is always a Baire
space in the relativized topology induced by the topology of
${\mathcal S}(E)$, i.e. the Baire Category Theorem holds for
$\partial_e {\mathcal S}(E).$

\begin{remark}\label{re:4.2} Theorem {\rm \ref{th:4.2}} was proved for a
pseudo effect algebra that is  an interval pseudo effect algebra,
i.e., $E = \Gamma(G,u),$ for a unital po-group $(G,u)$ with {\rm
(RDP)}. However, we do not know whether every pseudo effect algebra
with {\rm (RDP)} is an interval pseudo effect $\Gamma(G,u)$, where
also $(G,u)$ satisfies {\rm (RDP)}, it was necessary to prove
Theorem {\rm \ref{th:4.2}} in full details.
\end{remark}

If a pseudo effect algebra $E$ satisfies (RDP)$_2,$ then according
to \cite[Thm 4.4]{Dvu3}, the state space of $E$ is either the empty
set or a nonempty Bauer simplex.

\begin{example}\label{ex:3.7}
There is a pseudo effect algebra $E$  with {\rm (RDP)} but {\rm
(RDP)$_2$} fails to hold in $E$ such that $\mathcal S(E)$ is a
non-void Bauer simplex.
\end{example}

\begin{proof}
Let $\mathbb Q$ be the set of all rational numbers and let
$G=\mathbb Q\times \mathbb Q$ be ordered by the strict ordering,
i.e. $(g_1,g_2)\le (h_1,h_2)$ iff $g_1 < h_1$ and $g_2< h_2$ or
$g_1=h_1$ and $g_2=h_2.$ If we set $u=(1,1),$ then $E=\Gamma(G,u)$
is an effect algebra with (RDP)  that is not a lattice. If
$s_0(g,h):=h$ and $s_1(g,h):=g,$ then $s_0$ and $s_1$ are unique
extremal states on $E,$ and every state $s$ is of the form $s =
s_\lambda:=\lambda s_1 +(1-\lambda)s_0,$ $\lambda \in [0,1],$ for
more details, see \cite[Ex 4.2]{BCD}.
\end{proof}

A pseudo effect algebra $E$ has the {\it Bauer simplex property}
((BSP) for short), if $\mathcal S(E)$ is a non-void Bauer simplex.

Let $K$ be a compact convex subset of a locally convex Hausdorff
space. A mapping $f:\ K \to \mathbb R$ is said to be {\it affine}
if, for all $x,y \in K$ and any $\lambda \in [0,1]$, we have
$f(\lambda x +(1-\lambda )y) = \lambda f(x) +(1-\lambda ) f(y)$. Let
$\Aff(K)$ be the set of all continuous affine functions on $K.$ Then
$\mbox{Aff}(K)$ is a unital po-group with the strong unit $1$ which
is a subgroup  of the po-group $\mbox{C}(K)$ of all continuous
real-valued functions on $K$ (we recall that, for $f,g \in
\mbox{C}(K),$ $f \le g$ iff $f(x)\le g(x)$ for any $x \in K$). In
addition, $\mbox{C}(K)$ is  an $\ell$-group and the function $1$ is
its strong unit.

We note that   if $E$ is a pseudo effect algebra such that $\mathcal
S(E)\ne \emptyset,$  given $a \in E,$ let $\hat a:\mathcal S(E) \to
[0,1]$ such that $\hat a(s):= s(a),$ $s \in \mathcal S(E).$ Then
$\hat a \in \Aff(\mathcal S(E)).$

If  $K$ is a compact Hausdorff topological space, let ${\mathcal
B}(K)$ be the Borel $\sigma$-algebra of $K$ generated by all open
subsets of $K.$  Let  ${\mathcal M}_1^+(K)$ denote the set of  all
probability measures, that is, all positive regular
$\sigma$-additive Borel measures $\mu$ on $\mathcal B(K).$  We
recall that a Borel measure $\mu$ is called {\it regular} if each
value $\mu(Y)$ can be approximated by closed subspaces of $Y$ as
well by open subsets $O$ such that $Y \subseteq O.$

We recall that if $x \in K,$ then the Dirac measure $\delta_x$
defined by $\delta(A) :=\chi_A(x),$ $A \in \mathcal B(K),$ is a
regular Borel probability measure.

For two measures $\mu$ and $\nu$ we define the Choquet equivalence
$\sim$ defined by

$$\mu \sim \lambda\quad  \mbox{iff}\quad
\int_K f \dx \mu=\int_K f \dx \lambda,\ f \in \mbox{Aff}(K).
$$

If  $\mu $ and $\lambda$ are nonnegative regular Borel measures on a
convex compact set $K,$  we introduce for them the {\it Choquet
ordering} defined by
$$
\mu \prec \lambda  \quad  \mbox{iff} \quad \int_K f \dx \mu\le\int_K
f \dx \lambda, \ f \in \mbox{Con}(K),
$$  where $\mbox{Con}(K)$ is the set of all continuous convex
functions $f$ on $K$ (that is $f(\alpha x_1+(1-\alpha) x_2)\le
\alpha f(x_1)+(1-\alpha)f(x_2)$ for $x_1,x_2\in K$ and $\alpha \in
[0,1]$). Then $\prec$ is a partial order on the cone of nonnegative
measures. The fact $\lambda \prec \mu$ and $\mu \prec \lambda$
implies $\lambda = \mu$ follows from the fact that
$\mbox{Con}(K)-\mbox{Con}(K)$ is dense in $\mbox{\rm C}(K).$

Moreover, for any probability measure  $\lambda$ there is a maximal
probability measure $\mu$ in Choquet's ordering such that $\mu \succ
\lambda,$ \cite[Lem 4.1]{Phe}.

We recall that the Choquet ordering $\mu \prec \nu$ between two
probability measures $\mu$ and $\nu$ roughly speaking means that
$\nu$ is located further out than $\mu$ towards the set of extremal
points where the convex function have large values, \cite[p.
8]{AlSc}.

\begin{theorem}\label{th:7.2'}
Let $E$ be a pseudo effect algebra with {\rm (RDP)} and with
$\mathcal S(E)\ne \emptyset.$ Let $s$ be a state on $E.$ Let $\psi:
E \to \Aff(\mathcal S(E))$ be defined by $\psi(a) := \hat a,$ $a\in
E,$ where $\hat a$ is a mapping from $\mathcal S(E)$ into $[0,1]$
such that $\hat a(s):=s(a),$ $s \in \mathcal S(E).$ Then there is a
unique state $\tilde s$ on the unital po-group $(\Aff(\mathcal
S(E)),1)$ such that $\tilde s(\hat a) = s(a)$ for any $a \in E.$

The mapping $s \mapsto \tilde s$ defines an affine homeomorphism
from the state space $\mathcal S(E)$ onto $\mathcal
S(\Gamma(\Aff(\mathcal S(E)),1)).$
\end{theorem}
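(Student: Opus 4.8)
The plan is to unwind the definitions and apply the standard functional-analytic dictionary between states on an effect algebra and states on its ambient unital po-group, together with the representation of states on $\Aff(K)$ by probability measures on $K$. First I would establish that $\psi(a)=\hat a$ is well-defined and lands in $\Aff(\mathcal S(E))$; this is already noted in the excerpt (the paragraph preceding the theorem observes $\hat a\in\Aff(\mathcal S(E))$), and the key point is that each $\hat a$ is continuous for the weak topology because weak convergence $s_\alpha\to s$ means pointwise convergence $s_\alpha(a)\to s(a)$. I would also record that $\widehat{\,}$ is additive: if $a+b$ is defined in $E$ then $\widehat{a+b}=\hat a+\hat b$, and $\hat 1=1$, so $\psi$ is a morphism of effect algebras from $E$ into $\Gamma(\Aff(\mathcal S(E)),1)$.

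Next, to produce $\tilde s$: given a state $s$ on $E$, the point-evaluation $\mathrm{ev}_s\colon f\mapsto f(s)$ on $\mathrm{C}(\mathcal S(E))$ restricts to a positive, unital, $\mathbb R$-linear functional on the unital po-group $\Aff(\mathcal S(E))$. Since $\Aff(\mathcal S(E))$ is a real vector space, a state on it (in the po-group sense: positive, sending $1$ to $1$, additive) is exactly such a functional, and by construction $\mathrm{ev}_s(\hat a)=\hat a(s)=s(a)$ for all $a\in E$. This gives existence of $\tilde s:=\mathrm{ev}_s|_{\Aff(\mathcal S(E))}$. For uniqueness I would argue that the set $\psi(E)=\{\hat a : a\in E\}$ together with the constants generates $\Aff(\mathcal S(E))$ as a unital po-group in the sense relevant to states: every element of $\Aff(\mathcal S(E))$ is, up to scaling by a positive rational and translation by an integer multiple of $1$, a difference of elements $\hat a$; more precisely, since $1$ is a strong unit, any $f\in\Aff(\mathcal S(E))$ satisfies $-n1\le f\le n1$ for some $n$, so $f+n1\in[0,2n\cdot 1]$ and $\tfrac{1}{2n}(f+n1)\in\Gamma(\Aff(\mathcal S(E)),1)$; a state is then determined on this interval by its values on a generating sub-effect-algebra. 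The cleanest route is to invoke the order-determining and separating properties: the functions $\{\hat a:a\in E\}$ separate the points of $\mathcal S(E)$ (if $s\ne s'$ then $s(a)\ne s'(a)$ for some $a$), and together with $1$ they form a subspace of $\mathrm{C}(\mathcal S(E))$ that is dense by Stone--Weierstrass (it is a sub-vector-space containing constants and separating points, and one checks it is a sublattice or at least that its closure is an algebra). Hence a continuous affine functional on $\mathcal S(E)$, equivalently a state on $\Aff(\mathcal S(E))$, that vanishes appropriately is determined by its values on $\psi(E)$, giving uniqueness of $\tilde s$.

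For the second assertion, I would show $s\mapsto\tilde s$ is affine, injective, continuous, surjective, with continuous inverse. \emph{Affine and continuous}: $\widetilde{\lambda s_1+(1-\lambda)s_2}$ agrees with $\lambda\tilde s_1+(1-\lambda)\tilde s_2$ on each $\hat a$, hence everywhere by uniqueness; continuity is clear since a net $s_\alpha\to s$ weakly gives $\tilde s_\alpha(\hat a)=s_\alpha(a)\to s(a)=\tilde s(\hat a)$, i.e. $\tilde s_\alpha\to\tilde s$ pointwise on $\psi(E)$, and by the density/uniformity just established, pointwise on all of $\Aff(\mathcal S(E))$. \emph{Injective}: if $\tilde s_1=\tilde s_2$ then $s_1(a)=\tilde s_1(\hat a)=\tilde s_2(\hat a)=s_2(a)$ for all $a$. \emph{Surjective}: given a state $\sigma$ on $\Gamma(\Aff(\mathcal S(E)),1)$, extend it uniquely to a state on the po-group $\Aff(\mathcal S(E))$ (since $\Aff(\mathcal S(E))$ has (RIP)/interpolation as a subgroup of the $\ell$-group $\mathrm{C}(\mathcal S(E))$, or simply because it is a real vector space so the extension is by $\mathbb Q$-linearity then continuity), pull back along $\psi$ to get $s(a):=\sigma(\hat a)$, check $s$ is a state on $E$, and verify $\tilde s=\sigma$ — again by uniqueness, since they agree on $\psi(E)$. \emph{Inverse continuous}: this follows from compactness of $\mathcal S(E)$ (a continuous bijection from a compact space to a Hausdorff space is a homeomorphism), using that $\mathcal S(\Gamma(\Aff(\mathcal S(E)),1))$ is Hausdorff.

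\textbf{The main obstacle} I anticipate is the uniqueness clause and, relatedly, the surjectivity: one must be careful that the values $\{s(a):a\in E\}$ really pin down a po-group state on all of $\Aff(\mathcal S(E))$, i.e. that $\psi(E)$ is ``large enough''. The honest way to close this is the density of $\mathrm{span}\{\hat a:a\in E\}\cup\{1\}$ in $\mathrm{C}(\mathcal S(E))$ via Stone--Weierstrass, which requires checking this span (or its uniform closure) is closed under multiplication or is a sublattice; alternatively one appeals to the general fact (e.g. \cite[Thm 7.1, 11.x]{Goo}) that for a unital effect algebra (or interpolation po-group) the natural map into $\Aff(\mathcal S(E))$ induces an affine homeomorphism of state spaces. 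Since $\mathcal J(E)$ has been shown to be an Abelian Dedekind complete $\ell$-group with $\mathcal S(E)$ a Choquet simplex as its base (Theorem \ref{th:4.2}), the representation theory of \cite{Goo} applies verbatim once we know the point-separating property, so I would lean on that rather than redo the Banach-space density argument by hand.
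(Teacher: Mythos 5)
Your main route is the paper's own proof: define $\tilde s$ as the point evaluation $f\mapsto f(s)$ on $(\Aff(\mathcal S(E)),1)$ and invoke Goodearl's Theorem 7.1 (the evaluation map of a compact convex set $K$ onto $\mathcal S(\Aff(K),1)$ is an affine homeomorphism); that citation is literally the entire content of the paper's argument, and your continuity/injectivity/compactness remarks are consistent with it. One caveat: the Stone--Weierstrass fallback you sketch for uniqueness and surjectivity does not work as stated. The span of $\{\hat a : a\in E\}\cup\{1\}$ is contained in $\Aff(\mathcal S(E))$, so it can never be dense in $\mathrm{C}(\mathcal S(E))$ except in degenerate cases, and it is neither a subalgebra nor a sublattice, so Stone--Weierstrass does not apply; moreover the step ``a state on $\Aff(\mathcal S(E))$ is equivalently a continuous affine functional on $\mathcal S(E)$'' already presupposes exactly the nontrivial content of Goodearl's Theorem 7.1. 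The clean way to get uniqueness (and surjectivity) is the one you end up preferring: by that theorem every state on $(\Aff(\mathcal S(E)),1)$ is evaluation at some $t\in\mathcal S(E)$, and if it agrees with $s$ on all $\hat a$ then $t(a)=s(a)$ for all $a$, so $t=s$; no density of $\mathrm{span}\,\widehat E$ is needed (and whether such density holds is a separate, unneeded question). Note also that only compactness and convexity of $\mathcal S(E)$ are used here; the Choquet simplex structure from Theorem \ref{th:4.2} is not essential to this particular statement.
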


\begin{proof}
Since $E$ is a pseudo effect algebra such that $\mathcal S(E)$ is
non-void, Theorem \ref{th:4.2} asserts that $\mathcal S(E)$ is a
nonempty Choquet simplex.  We define $\widehat E:=\{\hat a: a\in E\}
\subset \Aff(\mathcal S(E))$ and let $\Aff(E)$ be the Abelian
subgroup of $\Aff(\mathcal S(E))$ generated by $\widehat E.$ Given
$s \in \mathcal S(E),$ let $\tilde s$ be a mapping defined on the
unital Abelian po-group with (RDP) $(\Aff(\mathcal S(E)),1)$ such
that $\tilde s(f):=f(s),$ $f \in \Aff(\mathcal S(E)).$  Then $\tilde
s$ is a state on $(\Aff(\mathcal S(E)),1).$

By \cite[Thm 7.1]{Goo}, the mapping $s \mapsto \tilde s$ is an
affine homeomorphism between $\mathcal S(E)$ and $\mathcal
S(\Aff(\mathcal S(E),1).$
\end{proof}

\begin{theorem}\label{th:7.3'}  Let $E$ be a pseudo effect algebra
with {\rm (RDP)} having at least one state. Let $s$ be a state on
$E.$ Then there is a unique maximal regular Borel probability
measure $\mu_s \sim \delta_s$ on $\mathcal B(\mathcal S(E))$ such
that

$$ s(a) = \int_{\mathcal S(E)} \hat a(x) \dx \mu_s(x),\quad a \in
E. \eqno(5.1)$$
\end{theorem}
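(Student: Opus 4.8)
The plan is to reduce this to the classical Choquet theory of representing points of a compact convex set by maximal boundary measures. The key observation is that, by Theorem \ref{th:7.2'}, the state $s$ on $E$ corresponds bijectively (and affinely, homeomorphically) to the evaluation state $\tilde s$ on the unital Abelian po-group $(\Aff(\mathcal S(E)),1)$, and $\mathcal S(E)$ itself is a Choquet simplex by Theorem \ref{th:4.2}. So it suffices to produce a maximal regular Borel probability measure on the compact convex set $K:=\mathcal S(E)$ which represents the point $s\in K$ in the sense of the affine functions. First I would invoke the basic existence theorem of Choquet theory: for the point $s\in K$, the Dirac measure $\delta_s$ is a probability measure, and by \cite[Lem 4.1]{Phe} there exists a maximal probability measure $\mu_s$ in the Choquet ordering with $\mu_s\succ\delta_s$. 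Since $\mu_s\succ\delta_s$ forces $\int_K f\dx\mu_s=f(s)$ for every $f\in\Aff(K)$ (affine functions lie in $\mathrm{Con}(K)\cap(-\mathrm{Con}(K))$), we get $\mu_s\sim\delta_s$. Applying this to $f=\hat a$, which lies in $\Aff(\mathcal S(E))$ by the remark preceding Theorem \ref{th:7.2'}, yields exactly formula (5.1): $s(a)=\hat a(s)=\int_{\mathcal S(E)}\hat a(x)\dx\mu_s(x)$.

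The remaining work is uniqueness of $\mu_s$, and this is where the simplex hypothesis is essential. For a general compact convex set the maximal representing measure need not be unique — Example \ref{ex:3.7}'s disc-versus-quadrilateral remark is the moral warning — but the whole point of Theorems \ref{th:4.2} and \ref{th:3.4} is that $\mathcal S(E)$ is a Choquet simplex, and the defining property of (metrizable, or more generally) Choquet simplices is precisely that each point has a unique maximal representing measure. Concretely, I would cite the Choquet–Meyer uniqueness theorem (e.g. \cite[Sec 3, 4]{Phe} or \cite[Thm I.5.13 ff.]{Alf}): $K$ is a simplex if and only if, for every $x\in K$, there is a unique maximal probability measure representing $x$. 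Since we already know $\mathcal S(E)$ is a Choquet simplex, uniqueness of $\mu_s$ among maximal probability measures with $\mu_s\sim\delta_s$ follows immediately: any other maximal $\mu_s'$ representing $s$ (i.e.\ with $\int f\dx\mu_s'=f(s)$ for all affine $f$) is a maximal measure representing the point $s$, hence equals $\mu_s$.

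Thus the proof structure is: (1) $\mathcal S(E)$ is a Choquet simplex — done in Theorem \ref{th:4.2}; (2) $\hat a\in\Aff(\mathcal S(E))$ — noted before Theorem \ref{th:7.2'}; (3) existence of a maximal $\mu_s\succ\delta_s$ — \cite[Lem 4.1]{Phe}; (4) $\mu_s\succ\delta_s$ together with $\delta_s\prec\mu_s$ restricted to affine functions (which are both convex and concave) gives $\mu_s\sim\delta_s$, hence (5.1) upon evaluating at $\hat a$; (5) uniqueness from the simplex property via the Choquet–Meyer theorem. The main obstacle is really just making sure the uniqueness citation is applied correctly: one must note that "maximal and $\sim\delta_s$" is the same as "maximal and representing $s$", and that the simplex property delivers uniqueness of such a measure; there is no genuine computation, only the careful bookkeeping of translating between $E$, $\Aff(\mathcal S(E))$, and $K=\mathcal S(E)$. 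I would write it roughly as follows.

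\begin{proof}
By Theorem \ref{th:4.2}, $\mathcal S(E)$ is a nonempty Choquet simplex; write $K=\mathcal S(E)$. For each $a\in E$, the function $\hat a$ belongs to $\Aff(K)$, as noted before Theorem \ref{th:7.2'}. Fix the state $s\in K$. The Dirac measure $\delta_s$ is a regular Borel probability measure on $\mathcal B(K)$, and by \cite[Lem 4.1]{Phe} there is a maximal regular Borel probability measure $\mu_s$ in the Choquet ordering with $\mu_s\succ\delta_s$. Every $f\in\Aff(K)$ satisfies $f\in\mathrm{Con}(K)$ and $-f\in\mathrm{Con}(K)$, so $\mu_s\succ\delta_s$ yields $\int_K f\dx\mu_s\le f(s)$ and $\int_K(-f)\dx\mu_s\le -f(s)$, whence $\int_K f\dx\mu_s=f(s)$; that is, $\mu_s\sim\delta_s$. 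Applying this to $f=\hat a$ gives
$$ s(a)=\hat a(s)=\int_{\mathcal S(E)}\hat a(x)\dx\mu_s(x),\quad a\in E, $$
which is (5.1).

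It remains to prove uniqueness of $\mu_s$. Suppose $\mu_s'$ is another maximal regular Borel probability measure on $\mathcal B(K)$ with $\mu_s'\sim\delta_s$. Then $\mu_s'$ is a maximal probability measure representing the point $s$ in the sense that $\int_K f\dx\mu_s'=f(s)=\int_K f\dx\delta_s$ for all $f\in\Aff(K)$, and the same holds for $\mu_s$. Since $K$ is a simplex, the Choquet--Meyer uniqueness theorem (see \cite[Sec 4]{Phe} or \cite[Thm I.5.13 ff.]{Alf}) asserts that each point of $K$ is the barycentre of exactly one maximal probability measure; hence $\mu_s'=\mu_s$. This establishes the uniqueness of the maximal regular Borel probability measure $\mu_s\sim\delta_s$ satisfying (5.1).
\end{proof}
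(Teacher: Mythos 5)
Your proposal is correct and follows essentially the same route as the paper: Theorem \ref{th:4.2} gives that $\mathcal S(E)$ is a Choquet simplex, $\hat a\in\Aff(\mathcal S(E))$, and the Choquet--Meyer theory in Phelps supplies the unique maximal measure equivalent to $\delta_s$ representing $s$, which yields (5.1); your only deviation is to split the citation into existence (\cite[Lem 4.1]{Phe}) plus the simplex uniqueness statement, which the paper absorbs into a single appeal to Choquet--Meyer. (The direction of the inequalities you derive from $\delta_s\prec\mu_s$ is written backwards, but since you apply them to both $f$ and $-f$ the resulting equality $\int f\dx\mu_s=f(s)$ is unaffected.)
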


\begin{proof}
Due to Theorem \ref{th:4.2}, $\mathcal S(E)$ is a nonempty Choquet
simplex. By Theorem \ref{th:7.2'}, there is a unique state $\tilde
s$ on $(\Aff(\mathcal S(E)),1)$ such that $\tilde s(\hat a) = s(a),$
$a \in A.$

Applying the Choquet--Meyer Theorem, \cite[Thm p. 66]{Phe},  we have

$$f(s)=\int_{\mathcal S(E)} f(x) \dx\mu_s, \quad f \in \Aff(\mathcal
S(E)).
$$
Since $\hat a \in \Aff(\mathcal S(E))$ for any $a\in E,$ we have the
representation given by (5.1).
\end{proof}

\begin{theorem}\label{th:7.5'}  Let $E$ be a pseudo effect algebra with
{\rm (BSP)} and let $s$ be a state on $E.$ Then there is a unique
regular Borel probability measure, $\mu_s,$ on $\mathcal B(\mathcal
S(E))$ such that $\mu_s(\partial_e \mathcal S(E))=1$ and

$$ s(a) = \int_{\partial_e \mathcal S(E)} \hat a(x) \dx \mu_s(x),\quad a \in
E. \eqno(5.2)$$
\end{theorem}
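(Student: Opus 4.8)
The plan is to reduce Theorem \ref{th:7.5'} to the Bauer-simplex case of the Choquet theory, exactly as Theorem \ref{th:7.3'} reduced the general (RDP) case to the Choquet--Meyer theorem. First I would invoke the hypothesis (BSP): by definition $\mathcal S(E)$ is a non-void Bauer simplex, so it is a compact Choquet simplex whose extreme boundary $\partial_e \mathcal S(E)$ is compact. Next, by Theorem \ref{th:7.2'}, the state $s$ corresponds to a unique state $\tilde s$ on the unital po-group $(\Aff(\mathcal S(E)),1)$ with $\tilde s(\hat a)=s(a)$ for all $a\in E$; equivalently, evaluation at $s$, the functional $f\mapsto f(s)$, is the relevant positive normalized functional on $\Aff(\mathcal S(E))$.

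The key step is then to apply the representation theorem for Bauer simplices: for a Bauer simplex $K$, every point $x\in K$ is the barycenter of a \emph{unique} maximal (boundary) probability measure, and this measure is carried by the closed set $\partial_e K$. Concretely, applying the Choquet--Bishop--de Leeuw / Bauer representation theorem (e.g. \cite[Section 4]{Phe} or \cite[Corollary p. 77]{Phe}), there is a unique regular Borel probability measure $\mu_s$ on $\mathcal B(\mathcal S(E))$ with $\mu_s(\partial_e \mathcal S(E))=1$ and barycenter $s$, i.e.
$$ f(s) = \int_{\mathcal S(E)} f(x)\dx\mu_s(x), \quad f\in \Aff(\mathcal S(E)). $$
Specializing $f=\hat a$ for $a\in E$ (which lies in $\Aff(\mathcal S(E))$ by the remark preceding Theorem \ref{th:7.2'}), and using that $\mu_s$ concentrates on $\partial_e\mathcal S(E)$ so the integral over $\mathcal S(E)$ equals the integral over $\partial_e\mathcal S(E)$, we obtain formula (5.2). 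Uniqueness of $\mu_s$ among all regular Borel probability measures representing $s$ is precisely the defining property of a Bauer simplex (uniqueness of the maximal measure combined with the fact that, for a Bauer simplex, the only probability measure supported on $\partial_e K$ with a given barycenter is the maximal one), so no extra argument is needed beyond citing it.

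The main obstacle, such as it is, is bookkeeping rather than mathematics: one must be careful that the measure produced by the Choquet--Meyer/Bauer machinery on $(\Aff(\mathcal S(E)),1)$ is literally a regular Borel probability measure on $\mathcal B(\mathcal S(E))$ and that "maximal in the Choquet ordering" coincides here with "supported on the (closed) extreme boundary" — this coincidence is exactly what distinguishes Bauer simplices among Choquet simplices, so it is available once (BSP) is assumed. A secondary point to state cleanly is measurability: $\partial_e\mathcal S(E)$ is closed, hence Borel, so $\mu_s(\partial_e\mathcal S(E))$ makes sense and the restriction of the integral to $\partial_e\mathcal S(E)$ is legitimate. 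Otherwise the proof is a direct transcription of the argument in Theorem \ref{th:7.3'} with "Choquet--Meyer" replaced by "Bauer" and "maximal measure" replaced by "the unique boundary measure".
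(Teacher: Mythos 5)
Your proposal is correct and follows essentially the same route as the paper: the paper obtains the unique maximal measure $\mu_s\sim\delta_s$ from Theorem \ref{th:7.3'} and then identifies it, via the characterization of Bauer simplices (Alfsen, Thm II.4.1), as the unique regular Borel probability measure carried by $\partial_e\mathcal S(E)$ representing $s$, which is exactly your ``maximal $=$ boundary'' step. The only cosmetic difference is that you invoke the Bauer representation theorem directly (via Theorem \ref{th:7.2'} and Phelps) rather than first citing Theorem \ref{th:7.3'}, but the substance of the argument is identical.
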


\begin{proof}  Due to Theorem \ref{th:7.3'}, we have a unique regular
Borel probability measure $\mu_s\sim \delta_s$ such that (5.1)
holds. The characterization of Bauer simplices, \cite[Thm
II.4.1]{Alf}, says that then $\mu_s$ is a unique regular Borel
probability measure $\mu_s$ on $\mathcal B(\mathcal S(E))$ such that
(4.1) holds and $\mu_s(\partial_e \mathcal S(E)) = 1.$ Hence, (5.2)
holds.
\end{proof}

It is worthy  to remark a note concerning formula (5.2) that if
$\mu$ is any regular Borel probability measure, the right-hand side
of formula (5.1) defines a state, say $s_\mu,$ on $E.$ But if
$\mu(\partial_e \mathcal S(E))<1,$ then for $s_\mu$ there is another
regular Borel probability measure $\mu_0$ such that
$\mu_0(\partial_e \mathcal S(E))=1$ and it represents $s_\mu$ via
(5.2).  For example, take $E$ from Example \ref{ex:3.7}. The state
space of $\mathcal S(E)$ is affinely homeomorphic with the real
interval $[0,1].$  Let $\mu_L$ be the Lebesgue measure on $[0,1].$
Formula (5.1) for $\mu_L$ defines a state $s_L$ on $E$ such that if
$a=(g,h)\in E$ and $s_\lambda(g,h)=\lambda g+(1-\lambda)h,$ then
$s_L(a) = \int_0^1 (\lambda g +(1-\lambda)h) \dx \mu_L(\lambda)=
(g+h)/2.$ So that $s_L= (s_0+s_1)/2,$ but $\mu_L(\partial_e \mathcal
S(E))=0.$ In other words,  $s_L$ has two different representations
by regular Borel probability measures via (5.1) ($\mu_L$ and
$(\delta_0 + \delta_1)/2$, only second one is described by Theorem
\ref{th:7.3'}) and uniquely  via (5.2).

\begin{corollary}\label{co:7.3''}  Let $E$ be a pseudo effect algebra
with {\rm (RDP)} having at least one state. Let $m$ be a Jordan
signed measure on  $E$ and let $m=\alpha s_1 -\beta s_2$ be its
canonical Jordan decomposition. Then there  are unique maximal
regular Borel probability measures $\mu_{s_1} \sim \delta_{s_1}$ and
$\mu_{s_2} \sim\delta_{s_2}$ on $\mathcal B(\mathcal S(E))$ such
that for $\mu_m:= \alpha_1 \mu_{s_1} -\alpha_2 \mu_{s_2}$ we have

$$ m(a) = \alpha_1 \int_{\mathcal S(E)} \hat a(x) \dx \mu_{s_1}(x)
- \alpha_2 \int_{\mathcal S(E)} \hat a(x) \dx \mu_{s_2}(x) =
\int_{\mathcal S(E)} \hat a(x) \dx \mu_{m}(x)
$$
for each $a \in E.$
\end{corollary}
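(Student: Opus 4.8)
The plan is to reduce Corollary \ref{co:7.3''} directly to the single-state representation already established in Theorem \ref{th:7.3'} and to the uniqueness of the canonical Jordan decomposition described around formula (4.2). First I would recall from (4.2) that, since $\mathcal S(E)\ne\emptyset$, every Jordan signed measure $m$ on $E$ has a unique canonical Jordan decomposition $m = m^+ - m^- = \alpha_1 s_1 - \alpha_2 s_2$, where $\alpha_1 = m^+(1)$, $\alpha_2 = m^-(1)$ are nonnegative reals and $s_1, s_2 \in \mathcal S(E)$ are the (uniquely determined, when $\alpha_i>0$) normalizations of $m^+$ and $m^-$; when $\alpha_i = 0$ the corresponding term is simply absent and one may take an arbitrary state there without affecting $m$. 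This is exactly the data referred to in the statement (with the harmless renaming $\alpha=\alpha_1$, $\beta=\alpha_2$).

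Next I would apply Theorem \ref{th:7.3'} separately to each of the states $s_1$ and $s_2$. Since $E$ is a pseudo effect algebra with (RDP) admitting at least one state, Theorem \ref{th:7.3'} furnishes, for $i=1,2$, a unique maximal regular Borel probability measure $\mu_{s_i}\sim\delta_{s_i}$ on $\mathcal B(\mathcal S(E))$ with
$$ s_i(a) = \int_{\mathcal S(E)} \hat a(x)\dx\mu_{s_i}(x),\qquad a\in E. $$
Setting $\mu_m := \alpha_1\mu_{s_1} - \alpha_2\mu_{s_2}$ as a signed Borel measure on $\mathcal B(\mathcal S(E))$, linearity of the integral in the measure argument gives, for every $a\in E$,
$$ \alpha_1\int_{\mathcal S(E)}\hat a(x)\dx\mu_{s_1}(x) - \alpha_2\int_{\mathcal S(E)}\hat a(x)\dx\mu_{s_2}(x) = \alpha_1 s_1(a) - \alpha_2 s_2(a) = m(a), $$
and the right-hand side equals $\int_{\mathcal S(E)}\hat a(x)\dx\mu_m(x)$ by additivity of the integral over the decomposition $\mu_m = \alpha_1\mu_{s_1} - \alpha_2\mu_{s_2}$. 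This establishes the displayed chain of equalities in the corollary.

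For uniqueness of $\mu_{s_1}$ and $\mu_{s_2}$, I would argue that the pair $(s_1,s_2)$ is intrinsically determined by $m$ via the canonical Jordan decomposition (Theorem \ref{th:3.5} and the discussion of (4.2)), so that the uniqueness claim is precisely the uniqueness already asserted by Theorem \ref{th:7.3'} applied to each of $s_1$ and $s_2$ individually: each is the unique maximal regular Borel probability measure in Choquet's ordering that is Choquet-equivalent to the Dirac measure at the respective state. The main subtlety — and the only place warranting care — is the degenerate case where $\alpha_1=0$ or $\alpha_2=0$: then $s_i$ is not pinned down by $m$, but the corresponding summand $\alpha_i\mu_{s_i}$ contributes nothing to $\mu_m$, so the statement should be read as concerning only the terms with positive coefficient; I would add a sentence making this reading explicit. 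No genuine obstacle arises beyond bookkeeping, since all the analytic content (existence of the Choquet maximal measure, its uniqueness on a simplex, and the integral representation of affine functions) is already packaged in Theorem \ref{th:7.3'}.
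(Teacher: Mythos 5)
Your proposal is correct and follows exactly the paper's route: the paper's own proof is the one-line observation that $m(a)=\alpha_1 s_1(a)-\alpha_2 s_2(a)$ and that the claim then follows from the canonical Jordan decomposition (4.2) together with Theorem \ref{th:7.3'} applied to $s_1$ and $s_2$ separately. Your added remark about the degenerate case $\alpha_i=0$ is a reasonable clarification the paper leaves implicit, but it does not change the argument.
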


\begin{proof}
Since $m(a) = \alpha_1 s_1(a) - \alpha_2 s_2(a),$ the statement
follows from (4.2) and Theorem \ref{th:7.3'}.
\end{proof}

\begin{theorem}\label{th:7.5''}  Let $E$ be a pseudo effect algebra with
{\rm (BSP)} such that $E$ has at least one state. Let $m$ be a
Jordan signed measure on $E$ and and let $m=\alpha_1 s_1 -\alpha_2
s_2$ be its canonical Jordan decomposition.

Then there  are unique  regular Borel probability measures
$\mu_{s_1},\mu_{s_2} $ on $\mathcal B(\mathcal S(E))$ such that
$\mu_{s_i}(\partial_e \mathcal S(E))=1$ for $i=1,2$ and for $\mu_m:=
\alpha_1 \mu_{s_1} -\alpha_2 \mu_{s_2}$ we have

$$ m(a) = \alpha_1\int_{\partial_e \mathcal S(E)} \hat a(x) \dx \mu_{s_1}(x)-
\alpha_2 \int_{\partial_e \mathcal S(E)} \hat a(x) \dx \mu_{s_2}(x)=
\int_{\partial_e \mathcal S(E)} \hat a(x) \dx \mu_{m}(x)
 $$
for each $a \in E.$
\end{theorem}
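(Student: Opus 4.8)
The plan is to deduce this from the single‑state representation of Theorem \ref{th:7.5'} by linearity, in exactly the way Corollary \ref{co:7.3''} was deduced from Theorem \ref{th:7.3'}. First I would note that, since $E$ has (BSP), the state space $\mathcal S(E)$ is a non‑void Bauer simplex; in particular it is a Choquet simplex with $\partial_e\mathcal S(E)$ compact, so Theorem \ref{th:7.5'} is available for every state on $E$. Applying it to the state $s_1$ occurring in the canonical Jordan decomposition $m=\alpha_1 s_1-\alpha_2 s_2$ (with $\alpha_1 s_1=m^+$) produces a unique regular Borel probability measure $\mu_{s_1}$ on $\mathcal B(\mathcal S(E))$ with $\mu_{s_1}(\partial_e\mathcal S(E))=1$ and $s_1(a)=\int_{\partial_e\mathcal S(E)}\hat a(x)\dx\mu_{s_1}(x)$ for all $a\in E$; applying it to $s_2$ (with $\alpha_2 s_2=m^-$) gives $\mu_{s_2}$ with the analogous properties. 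Here $\hat a\in\Aff(\mathcal S(E))$ is continuous, so its restriction to $\partial_e\mathcal S(E)$ is Borel measurable and the integrals are meaningful.

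Next I would invoke formula (4.2), which gives $m(a)=\alpha_1 s_1(a)-\alpha_2 s_2(a)$ for every $a\in E$. Substituting the two integral representations just obtained and setting $\mu_m:=\alpha_1\mu_{s_1}-\alpha_2\mu_{s_2}$, a finite signed regular Borel measure on $\mathcal B(\mathcal S(E))$ carried by $\partial_e\mathcal S(E)$, linearity of the integral with respect to the integrating measure yields
$$ m(a)=\alpha_1\int_{\partial_e\mathcal S(E)}\hat a(x)\dx\mu_{s_1}(x)-\alpha_2\int_{\partial_e\mathcal S(E)}\hat a(x)\dx\mu_{s_2}(x)=\int_{\partial_e\mathcal S(E)}\hat a(x)\dx\mu_m(x) $$
for each $a\in E$, which is the asserted formula.

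For uniqueness, I would argue that the canonical Jordan decomposition is itself unique: by (4.1) the pair $(m^+,m^-)$ is determined by $m$, hence so are $\alpha_i=m^{\pm}(1)$ and, whenever $\alpha_i>0$, the states $s_i=m^{\pm}/\alpha_i$; then for each such $i$ the measure $\mu_{s_i}$ representing $s_i$ as in (5.2) is unique by Theorem \ref{th:7.5'}. The only degenerate situation is $\alpha_i=0$ (i.e. $m^+=0$ or $m^-=0$), in which case $s_i$, and therefore $\mu_{s_i}$, may be chosen arbitrarily; this does not affect $\mu_m$, since $\mu_{s_i}$ enters only through the vanishing coefficient $\alpha_i$. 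I do not expect a genuine obstacle here — the analytic content lies entirely in Theorems \ref{th:7.3'} and \ref{th:7.5'} — the only point needing a line of care is the bookkeeping of the uniqueness claim in the presence of a possibly vanishing $\alpha_i$.
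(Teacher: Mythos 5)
Your proposal is correct and follows the paper's own route: the paper proves this theorem in one line, citing exactly the canonical Jordan decomposition (4.2) and Theorem \ref{th:7.5'}, which is what you do, only spelled out with linearity of the integral and the (harmless) bookkeeping for a vanishing $\alpha_i$.
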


\begin{proof} It follows from (4.2) and Theorem \ref{th:7.5'}.
\end{proof}

\section{Conclusion}

We have extended the study of representing  states on  effect
algebras by integrals that was started in the paper \cite{Dvu2} for
states on pseudo effect algebras, quantum structures where the
partial addition is not more assumed to be commutative.

Our research is based on methods of simplices and their application
to state spaces. We have showed that every pseudo effect algebra
that satisfies the same kind of the Riesz Decomposition Property,
(RDP),  is always a Choquet simplex, Theorem \ref{th:4.2}.  This
Theorem extends the result known for effect algebras with (RDP), see
\cite[Thm 5.1]{Dvu1}, for pseudo effect algebras with a stronger
version, (RDP)$_1$, that is always an interval in a unital po-group
with (RDP)$_1,$ and for interval pseudo effect algebras with (RDP),
see \cite[Thm 4.3]{Dvu3}. We note that we do not know whether every
pseudo effect algebra with (RDP) is an interval in a unital
po-group.

Finally, this result was applied to represent states on pseudo
effect algebras with (RDP) by integrals through  regular Borel
probability measures, Theorem \ref{th:7.3'} and Theorem
\ref{th:7.5'}.

It is important to make a finale remark that formulas (5.1) and
(5.2) show that they are a bridge between the approach by de Finetti
who was a propagator of probabilities as finitely additive measures,
and the approach by Kolmogorov for whom  a probability measure was a
$\sigma$-additive measure, \cite{Kol}. The mentioned formulas say by
a way that these two approaches are equivalent.

\end{document}